\documentclass[12pt]{amsart}

\usepackage[T1]{fontenc}
\usepackage[utf8]{inputenc}
\usepackage{amsmath,amssymb,amsthm,mathrsfs}
\usepackage{enumitem}
\usepackage[hidelinks]{hyperref}
\hypersetup{
  pdftitle={Combinatorial Proofs for Two-Color Partition Identities Involving Overpartitions and Minimal Excludants},
  pdfauthor={Dandan Chen, Rong Chen, and Mengjie Zhao},
  pdfkeywords={two-color partitions, overpartitions, minimal excludants, parity-reversing involutions, bijective proofs}
}

\newtheorem{theorem}{Theorem}[section]
\newtheorem{lemma}[theorem]{Lemma}

\theoremstyle{definition}
\newtheorem{definition}[theorem]{Definition}

\theoremstyle{remark}
\newtheorem{remark}[theorem]{Remark}
\newtheorem*{remark*}{Remark}
\usepackage{xcolor}
\numberwithin{equation}{section}
\allowdisplaybreaks

\title[Two-color partitions and overpartitions]{Combinatorial Proofs for Two-Color Partition Identities Involving Overpartitions and Minimal Excludants}
\author{Dandan Chen}
\address{Department of Mathematics, Shanghai University, People's Republic of China}
\address{Newtouch Center for Mathematics of Shanghai University, Shanghai, People's Republic of China}
\email{mathcdd@shu.edu.cn}

\author{Mengjie Zhao}
\address{Department of Mathematics, Shanghai University, People's Republic of China}
\email{zmjj@shu.edu.cn}
\author{Ziyin Zou}
\address{Department of Mathematics, Shanghai University, People's Republic of China}
\email{ziyinzou@126.com}

\subjclass[2010]{Primary 05A17; Secondary 11P81, 11P84}
\keywords{Two-color partitions; Overpartitions; Minimal excludants; Parity-reversing involutions; Bijective proofs}

\begin{document}

\begin{abstract}
Andrews and El Bachraoui investigated two-color partitions in which even parts may occur only in blue and obtained several identities relating parity refinements of these partitions to overpartitions and minimal excludants. Some of their proofs are analytic and rely on $q$-series manipulations. In this paper, we construct explicit weight-preserving bijections and parity-reversing involutions that provide combinatorial proofs of these identities. We also give a new combinatorial proof of related overpartition refinements involving $\overline{p}(n)$ and $\overline{p}_o(n)$.
\end{abstract}

\maketitle

\section{Introduction}

In 2004, Corteel and Lovejoy \cite{CL_04} introduced overpartitions of a positive integer $n$. An overpartition is a partition in which the first occurrence of each distinct part may be overlined. Let $\overline{p}(n)$ denote the number of overpartitions of $n$. Hirschhorn and Sellers \cite{HS_06} studied overpartitions into odd parts; denoting their number by $\overline{p}_o(n)$, one has
\begin{equation*}
\sum_{n\ge0}\overline{p}(n)q^n=\frac{(-q;q)_\infty}{(q;q)_\infty},
\qquad
\sum_{n\ge0}\overline{p}_o(n)q^n=\frac{(-q;q^2)_\infty}{(q;q^2)_\infty}.
\end{equation*}
Here the $q$-shifted factorial \cite{A-1998} is defined by
\begin{equation*}
(a;q)_0=1,
\qquad
(a;q)_n=\prod_{j=0}^{n-1}(1-aq^j),
\qquad
(a;q)_\infty=\prod_{j=0}^{\infty}(1-aq^j),
\end{equation*}
where $|q|<1$ in the infinite product.

Throughout this paper, calligraphic letters denote sets of partitions, and the corresponding non-calligraphic letters denote their cardinalities. We write $|\pi|$ for the weight of a partition $\pi$ and $\ell(\pi)$ for its number of parts. Let $\overline{\mathcal P}_o(n)$ be the set of overpartitions of $n$ in which the overlined parts are distinct odd integers and the non-overlined parts are unrestricted odd integers. Thus $\overline{p}_o(n)=|\overline{\mathcal P}_o(n)|$.

Following Andrews and El Bachraoui \cite{AM-25}, we consider sequences of integer partitions in two colors (red and blue) with conditions on the parity and color of their summands. We write $\lambda_{\mathrm b}$ and $\lambda_{\mathrm r}$, respectively, for a part $\lambda$ occurring in blue and red, respectively, and use the order convention $\lambda_{\mathrm b}>\lambda_{\mathrm r}$ when equal numerical parts of different colors are displayed. The next two definitions are quoted in the form of \cite[Definitions 1 and 2]{AM-25}, with calligraphic notation used only to distinguish sets from their cardinalities.

\begin{definition}[{\cite[Definition 1]{AM-25}}]\label{def:F}
For any nonnegative integer $n$, let $\mathcal F(n)$ be the set of two-color integer partitions of $n$ such that the even parts may occur only in blue and let $F(n)=|\mathcal F(n)|$. Then we clearly have
\begin{equation*}
\sum_{n\ge0}F(n)q^n=\frac{1}{(q;q^2)_\infty^2(q^2;q^2)_\infty}
=1+2q+4q^2+8q^3+14q^4+24q^5+\cdots.
\end{equation*}
\end{definition}

\begin{definition}[{\cite[Definition 2]{AM-25}}]\label{def:F-refinements}
For a nonnegative integer $n$, let $F_0(n)$ (resp.\ $F_1(n)$) be the number of partitions in $\mathcal F(n)$ in which the number of odd parts in red is even (resp.\ odd). Furthermore, let $F_2(n)$ (resp.\ $F_3(n)$) be the number of partitions in $\mathcal F(n)$ in which the number of even parts is even (resp.\ odd).
\end{definition}

For later use, we write $r(\pi)$ for the number of odd parts of $\pi$ in red, counted with multiplicity, and $e(\pi)$ for the number of even parts of $\pi$, counted with multiplicity. Thus the corresponding subsets of $\mathcal F(n)$ are
\begin{align*}
\mathcal F_0(n)&=\{\pi\in\mathcal F(n): r(\pi)\equiv0\pmod2\},\\
\mathcal F_1(n)&=\{\pi\in\mathcal F(n): r(\pi)\equiv1\pmod2\},\\
\mathcal F_2(n)&=\{\pi\in\mathcal F(n): e(\pi)\equiv0\pmod2\},\\
\mathcal F_3(n)&=\{\pi\in\mathcal F(n): e(\pi)\equiv1\pmod2\},
\end{align*}
and $F_i(n)=|\mathcal F_i(n)|$ for $0\le i\le3$.

Andrews and El Bachraoui \cite{AM-25} obtained analytic proofs of several identities for these parity refinements. The first pair concerns the parity of the number of odd parts in red.

\begin{theorem}[{\cite[Theorem 5]{AM-25}}]\label{thm:F01}
For any nonnegative integer $n$, we have
\begin{align*}
F_0(n)&=\frac{\overline{p}(n)+\overline{p}(n/2)}{2},\\
F_1(n)&=\frac{\overline{p}(n)-\overline{p}(n/2)}{2}.
\end{align*}
\end{theorem}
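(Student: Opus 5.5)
The plan is to prove the two equivalent identities
\begin{equation*}
F_0(n)+F_1(n)=\overline{p}(n),\qquad F_0(n)-F_1(n)=\overline{p}(n/2),
\end{equation*}
with $\overline{p}(n/2)=0$ for odd $n$. Adding and subtracting these gives Theorem~\ref{thm:F01}. The first identity is handled by a weight-preserving bijection $\mathcal F(n)\to\overline{\mathcal P}(n)$, where $\overline{\mathcal P}(n)$ denotes the set of overpartitions of $n$. The second is handled by a sign-reversing involution on $\mathcal F(n)$ with sign $(-1)^{r(\pi)}$.

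\textbf{Step 1 (the sum).} Since $F_0(n)+F_1(n)=F(n)$, it suffices to show $F(n)=\overline{p}(n)$. The generating function already suggests this, because $1/(q;q^2)_\infty=(-q;q)_\infty$. Split $\pi\in\mathcal F(n)$ into its blue parts and its red parts.
\begin{itemize}
\item The blue parts form an ordinary partition with parts of either parity. These become the non-overlined parts.
\item The red parts form a partition into odd parts. Glaisher's bijection turns it into a partition into distinct parts of the same weight, and these become the overlined parts.
\end{itemize}
The result is an overpartition of $n$. The map is invertible: non-overlined parts go back to blue, and overlined parts are sent back through Glaisher's map to red odd parts. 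Call this bijection $\Phi$.

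\textbf{Step 2 (the difference).} Fix an odd value $k$, and let $a$ and $b$ be the multiplicities of $k_{\mathrm r}$ and $k_{\mathrm b}$ in $\pi$, with $N=a+b$. On the pairs $(a,b)$ with $a+b=N$, use the local pairing
\begin{equation*}
(0,N)\leftrightarrow(1,N-1),\quad (2,N-2)\leftrightarrow(3,N-3),\ \dots
\end{equation*}
This pairing changes $a$ by one and so reverses the sign. It leaves only $(N,0)$ unpaired, and only when $N$ is even. Define the involution $\iota$ by applying this pairing at the smallest odd $k$ whose pair $(a,b)$ is not of the form ($N$ even, $b=0$).

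The main point to check is that $\iota$ is well defined and is an involution. This holds because the pairing preserves $N$, and it maps non-fixed configurations to non-fixed configurations. Hence the smallest such $k$ is unchanged, and $\iota^2=\mathrm{id}$. Since $\iota$ reverses the sign, the signed sum $F_0(n)-F_1(n)$ equals the number of fixed points, each counted with sign $+1$.

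\textbf{Step 3 (counting fixed points).} A fixed point of $\iota$ has the following shape:
\begin{itemize}
\item every odd part is red, and each odd value occurs an even number of times, say $2m_k$ copies of $k$;
\item the even parts, all blue, are arbitrary.
\end{itemize}
Every part then contributes an even amount, so $n$ is even. Halve the weights by replacing the $2m_k$ copies of $k$ with $m_k$ copies of $k$, and each even part $2j$ with $j$. This gives a bijection from the fixed points onto pairs $(\alpha,\beta)$ with $|\alpha|+|\beta|=n/2$, where $\alpha$ is a partition into odd parts and $\beta$ is an unrestricted partition. Now color $\alpha$ red and $\beta$ blue. The pair becomes an element of $\mathcal F(n/2)$, in which even parts are indeed blue. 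Applying $\Phi$ from Step 1 shows that the number of fixed points is $\overline{p}(n/2)$.

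Combining Steps 1–3 gives both identities, and solving for $F_0(n)$ and $F_1(n)$ yields Theorem~\ref{thm:F01}. The step I expect to require the most care is Step 2, namely the verification that the ``smallest bad odd value'' rule is stable under $\iota$. All other steps are direct applications of Glaisher's bijection.
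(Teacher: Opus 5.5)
Your proposal is correct and is essentially the paper's proof. Your pairing $(2i,N-2i)\leftrightarrow(2i+1,N-2i-1)$ at the least bad odd value is exactly the paper's colour-switching involution on $\mathcal F(n)\setminus\mathcal Q(n)$: it flips the red copy when $a$ is odd and otherwise flips one blue copy, and it has the same fixed-point set $\mathcal Q(n)$. The differences are minor. You prove $F(n)=\overline p(n)$ directly, via Glaisher's bijection on the red parts, where the paper cites Chen--Zou. You also halve before applying Glaisher, while the paper applies the binary-expansion map $\mathcal Q(n)\to\overline{\mathcal P}_e(n)$ and then halves; both orders give the same composite bijection onto overpartitions of $n/2$.
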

Here and throughout, $\overline{p}(n/2)$ is understood to be $0$ when $n$ is odd.

To state the identities involving the parity of the number of even parts, we recall a minimal-excludant refinement. Andrews and Newman \cite{AN_20} defined the minimal excludant function $\operatorname{mex}_{A,a}(\pi)$. Andrews and El Bachraoui \cite{AM-25} extended this function to the present two-color setting as follows. Suppose that $A$ is even and that parts congruent to $0\pmod{A/2}$ may occur only in blue. For $\pi\in\mathcal F(n)$, let $\operatorname{mex}_{A,a}(\pi,\mathrm{blue})$ be the smallest positive integer congruent to $a\pmod A$ that does not occur in $\pi$. Let $p_{A,a}(n,\mathrm{blue})$ count the partitions $\pi\in\mathcal F(n)$ for which
\begin{equation*}
\operatorname{mex}_{A,a}(\pi,\mathrm{blue})\equiv a\pmod{2A},
\end{equation*}
and let $\overline{p}_{A,a}(n,\mathrm{blue})$ count the partitions $\pi\in\mathcal F(n)$ for which
\begin{equation*}
\operatorname{mex}_{A,a}(\pi,\mathrm{blue})\equiv A+a\pmod{2A}.
\end{equation*}
With this notation, the identities for $F_2(n)$ and $F_3(n)$ are as follows.

\begin{theorem}[{\cite[Theorems 3 and 4]{AM-25}}]\label{thm:F23}
For any nonnegative integer $n$, we have
\begin{align}
F_2(n)&=p_{4,2}(n,\mathrm{blue}),\label{eq:F2-mex}\\
F_3(n)&=\overline{p}_{4,2}(n,\mathrm{blue}).\label{eq:F3-mex}
\end{align}
\end{theorem}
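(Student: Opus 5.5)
The plan is to prove the signed identity
\begin{equation*}
F_2(n)-F_3(n)=p_{4,2}(n,\mathrm{blue})-\overline{p}_{4,2}(n,\mathrm{blue}).
\end{equation*}
Every $\pi\in\mathcal F(n)$ has $\operatorname{mex}_{4,2}(\pi,\mathrm{blue})\equiv 2$ or $6\pmod 8$, so $p_{4,2}(n,\mathrm{blue})+\overline{p}_{4,2}(n,\mathrm{blue})=F(n)$. Since also $F_2(n)+F_3(n)=F(n)$, the signed identity gives both \eqref{eq:F2-mex} and \eqref{eq:F3-mex} after adding and subtracting. I will compute both signed sums by sign-reversing involutions that act only on the blue even parts. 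The odd parts, in either colour, are left untouched throughout, so they contribute the common factor $1/(q;q^2)_\infty^2$ on both sides.

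\emph{Left side.} Write $\mu$ for the multiset of (blue) even parts and weight it by $(-1)^{\ell(\mu)}$. I define an involution on partitions into even parts. Take the smallest part size $2m$ that either occurs with multiplicity at least $2$ or has $m$ even. If $2m$ has multiplicity at least $2$ and $m$ is odd, merge two copies into one part $4m$. If $m$ is even, split one part $2m$ into two copies of $m$.

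This is the usual Glaisher-type pairing, and I will check that the rule as stated is indeed an involution. Merging changes $\ell(\mu)$ by exactly one, so the map reverses the sign. The fixed points are the partitions into distinct parts $\equiv2\pmod4$. This yields
\begin{equation*}
\sum_{n\ge0}(F_2(n)-F_3(n))q^n=\frac{(q^2;q^4)_\infty}{(q;q^2)_\infty^2}.
\end{equation*}
Combinatorially, what survives is $\mathcal D$, the set of pairs (odd two-colour part, distinct parts $\equiv2\pmod4$) with sign $(-1)^{\#\{\text{parts}\equiv2 \ (4)\}}$.

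\emph{Right side.} If $\operatorname{mex}_{4,2}(\pi,\mathrm{blue})=4k+2$, then $\pi$ contains $2,6,\dots,4k-2$ and not $4k+2$, and it contributes $(-1)^k$. I will build an involution on $\{\pi\in\mathcal F(n)\}$, signed this way, whose fixed points are in sign-preserving bijection with $\mathcal D$. I would try this first.

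Remove one copy each of $2,6,\dots,4k-2$, a staircase of weight $2k^2$. What remains is an arbitrary multiset $\nu$ of blue even parts, subject only to the condition that $4k+2$ does not occur. Now look at the parts $\equiv2\pmod4$ of $\nu$ that exceed $4k+2$, and at the multiplicities of all parts. The signed sum is $\sum_k(-1)^kq^{2k^2}(1-q^{4k+2})/(q^2;q^2)_\infty$, and this should telescope to $(q^2;q^4)_\infty$.

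Combinatorially I will match $\pi$ having mex $4k+2$ with a partition having mex $4k+6$ or $4k-2$. The move is to insert or delete a part $4k+2$ and correspondingly adjust the multiplicity of a larger part $\equiv 2\pmod 4$ in $\nu$, so that the weight is preserved. The unmatched objects should be exactly those in which the blue even parts, beyond the staircase, are distinct and $\equiv2\pmod4$, which are again counted by $\mathcal D$.

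The main obstacle is making this last involution weight-preserving and well defined. Toggling the part $4k+2$ alone changes the weight. So the correct move probably has to be composed with the Glaisher-type involution from the left side, applied to $\nu$, so that both sides are first reduced to the same fixed-point set $\mathcal D$. If a clean bijection is elusive, I will fall back on verifying the telescoping $q$-series identity above. For that, I will expand $1/(q^2;q^2)_\infty$ via Euler's identity and check against $(q^2;q^4)_\infty$. I will then interpret each step by the involutions where possible.
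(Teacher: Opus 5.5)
\textbf{Main gap: the core identity is never proved.} Your outer reduction is sound and is essentially the paper's. The odd parts are inert, and
\[
p_{4,2}(n,\mathrm{blue})+\overline p_{4,2}(n,\mathrm{blue})=F(n)=F_2(n)+F_3(n).
\]
After halving the blue even parts, everything reduces to the signed identity
\[
\sum_{\lambda}(-1)^{\ell(\lambda)}q^{|\lambda|}=\sum_{\lambda}(-1)^{(\operatorname{omex}(\lambda)-1)/2}q^{|\lambda|}
\]
over ordinary partitions $\lambda$. Its unsigned form $A_j(N)=B_j(N)$ is Lemma~\ref{lem:omex-parity}.

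Your series does not telescope to a product. It collapses to a bilateral theta series,
\[
\sum_{k\ge0}(-1)^kq^{2k^2}\bigl(1-q^{4k+2}\bigr)=\sum_{k\in\mathbb Z}(-1)^kq^{2k^2}.
\]
The claim that this equals $(q^2;q^2)_\infty(q^2;q^4)_\infty$ is Gauss's identity $\sum_{k\in\mathbb Z}(-1)^kq^{k^2}=(q;q)_\infty/(-q;q)_\infty$ with $q\to q^2$. That is a special case of the Jacobi triple product. Expanding $1/(q^2;q^2)_\infty$ by Euler's identity and comparing coefficients does not establish it.

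Nor can the combinatorial involution you sketch exist with the fixed points you predict. In blue-even weight $4$ the only configurations are $\{4\}$ (mex $2$, sign $+$) and $\{2,2\}$ (mex $6$, sign $-$). Your description leaves $\{2,2\}$ unmatched (staircase $\{2\}$, remainder $\{2\}$), so $\{4\}$ has no partner, while $\mathcal D$ is empty in that weight.

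This identity is where all the depth of the theorem lies. The paper supplies it bijectively through the Kang--Li--Wang refinement (Lemma~\ref{lem:KLW 1}), proved via Konan's $j$-mex bijection and the fish-hook construction, and then transports it to all partitions in Lemma~\ref{lem:omex-parity}. At a minimum, you need to invoke Gauss's identity explicitly to get a complete, if analytic, proof.

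\textbf{Smaller, repairable problem: the left-hand map is not an involution as stated.} For $\mu=(2,2,2,2)$ it produces $(4,2,2)$ and then $(4,4)$. For $\mu=(8)$ it produces $(4,4)$ and then $(4,2,2)$. Selecting the smallest ``bad'' size is not stable under the move. The fix:
\begin{itemize}
\item group the even parts into chains $2k,4k,8k,\dots$ with $k$ odd;
\item take the least $k$ whose chain is not ``at most one copy of $2k$ and nothing else'';
\item split or merge at the top of that chain, as in the involution the paper uses for Theorem~\ref{thm:PO}(b),(c).
\end{itemize}
Then the fixed points are exactly the partitions into distinct parts $\equiv2\pmod4$, and your left-hand generating function is correct.
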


To give a combinatorial proof of Theorem \ref{thm:F23}, we provide an explicit bijective proof of the following refinement of Euler's partition theorem due to Kang, Li, and Wang. Recently, Wang and  Zhang \cite{Wang} also gave an elegant combinatorial proof of this result.
\begin{lemma}[{\cite[Theorem 1.10]{KA_23}}]\label{lem:KLW 1}
The partitions of \(n\) into \(m\) odd parts whose minimal odd excludant is congruent to \(1\) \textnormal{(}respectively, \(3\)\textnormal{)} modulo \(4\) are equinumerous with the distinct partitions of \(n\) into an even \textnormal{(}respectively, odd\textnormal{)} number of parts with alternating sum \(m\).
\end{lemma}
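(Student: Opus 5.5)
The plan is to factor the bijection through conjugation. Write a distinct partition as $\lambda_1>\lambda_2>\dots>\lambda_k>0$, and take $k$ even by appending a zero part if necessary. Let $\mu=\lambda'$ be its conjugate.

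\textbf{Step 1 (conjugation).} Since $\lambda$ has distinct parts, $\mu$ is \emph{gap-free}: every integer $1,\dots,\lambda_1$ occurs as a part of $\mu$, and its largest part is $k$, the number of parts of $\lambda$. Moreover $\lambda'_j=\#\{i:\lambda_i\ge j\}$ is odd exactly when $j\in(\lambda_2,\lambda_1]\cup(\lambda_4,\lambda_3]\cup\cdots$. Hence the alternating sum $\lambda_1-\lambda_2+\lambda_3-\cdots$ equals the number of odd parts of $\mu$. So conjugation is a weight-preserving bijection between:
\begin{itemize}
\item distinct partitions of $n$ with $k$ parts and alternating sum $m$, and
\item gap-free partitions of $n$ with largest part $k$ and exactly $m$ odd parts.
\end{itemize}
Under this bijection, the parity of the number of parts of $\lambda$ becomes the parity of the largest part of $\mu$. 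This step is routine.

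\textbf{Step 2 (gap-free to odd parts).} Next I would build a bijection $\Phi$ from gap-free partitions $\mu$ with $m$ odd parts to partitions $\sigma$ of the same weight into exactly $m$ odd parts. It should carry ``largest part of $\mu$ even'' to $\operatorname{mex}$ of the odd parts $\equiv1\pmod 4$, and ``largest part odd'' to $\operatorname{mex}\equiv3\pmod 4$.

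The natural first attempt keeps the odd parts $o_1\ge\cdots\ge o_m$ of $\mu$ as skeletons and absorbs the even parts into them. Write the even parts as $2b_1\ge 2b_2\ge\cdots$ and conjugate $b=(b_1,b_2,\dots)$. Because $\mu$ is gap-free, $b_1\le\lfloor k/2\rfloor\le m$, so $b'$ has at most $m$ parts. Then set $\sigma_i=o_i+2b'_i$. Because $\mu$ is gap-free, its odd parts contain $1,3,\dots$ up to the largest odd number $\le k$.

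If this naive recipe destroys either invertibility or the $\operatorname{mex}$ information, I would refine it. The refinement is to first remove the staircase $1,2,\dots,k$ from $\mu$. The remainder $\nu$ is an arbitrary partition with parts $\le k$. The staircase contributes the odd parts $1,3,\dots,2\lceil k/2\rceil-1$, and these should become a protected initial run $1,3,\dots$ of $\sigma$. The even parts of $\nu$ are then added, via conjugation, in doubled form onto the odd parts lying strictly above that run. Adding in this way does not fill the run's first gap, so the first missing odd value of $\sigma$ is read off from $\lceil k/2\rceil$.

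\textbf{Main obstacle.} The hard step is Step 2: proving that $\Phi$ is invertible while simultaneously controlling the minimal odd excludant. The refinement only pins down the run length $\lceil k/2\rceil$, and that determines the $\operatorname{mex}$ class modulo $4$ only through the parity of $\lceil k/2\rceil$. This is not the same as the parity of $k$ (consider $k=2$ and $k=4$). So the construction must be adjusted so that the protected run is governed by $k$ itself, or some equivalent fix found.

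For the inverse, I would read off the mex $2t+1$ of $\sigma$, strip one copy of each of $1,3,\dots,2t-1$, and recover $b'$ from the differences $(\sigma_i-o_i)/2$. I would then check that the result is gap-free with the right largest part. Before committing, I would verify the construction on small $n$ (say $n\le 8$) against direct enumeration.

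Composing Step 1 with $\Phi^{-1}$ then gives the stated equinumerosity for each fixed $n$ and $m$, in both the $1\pmod 4$ and $3\pmod 4$ cases.
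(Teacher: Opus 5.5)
Your Step 1 is a correct but routine reformulation. Conjugation sends a distinct partition with $k$ parts and alternating sum $m$ to a partition $\mu$ with largest part $k$ that contains each of $1,\dots,k$ (not $1,\dots,\lambda_1$; also, padding with a zero part conflicts with tracking the parity of $k$), and $\mu$ has exactly $m$ odd parts. After this, your Step 2 \emph{is} the lemma, and you do not construct it.

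Both of your candidates already fail at $n=6$, $m=2$. The only admissible $\mu$ are $(2,2,1,1)$ with $k=2$ and $(3,2,1)$ with $k=3$. They must go to $(3,3)$ (omex $1$) and $(5,1)$ (omex $3$), respectively.
\begin{itemize}
\item The naive recipe $\sigma_i=o_i+2b'_i$ sends both to $(5,1)$, so it is not injective.
\item The refined recipe always puts the part $1$ into $\sigma$ for $k\ge1$ and forces $\operatorname{omex}(\sigma)=2\lceil k/2\rceil+1$. It can therefore never reach an odd partition with no part $1$, such as $(3,3)$.
\end{itemize}
So the gap is not just a matter of adjusting the length of a protected run.

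The missing idea is the mechanism linking the parity of the number of parts to the omex class. In the paper it comes from two separate ingredients.

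First, consider Sylvester's fish-hook bijection from a partition $\lambda$ into $m$ odd parts to a distinct partition $\beta$ with $\operatorname{alt}(\beta)=m$. Under it, $\ell(\beta)=2I-\epsilon$, where $I=\max\{i:\lambda_i\ge 2i-1\}$ and $\epsilon=1$ exactly when $\lambda_I=2I-1$. So the parity of $\ell(\beta)$ records a ``diagonal fixed point'' $\lambda_i=2i-1$, not an excludant.

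Second, write $\eta_i=2\sigma_i-1$ and $\lambda_i=2\rho_i-1$. Then $\operatorname{omex}(\eta)\equiv1$ (resp.\ $3$) $\pmod 4$ becomes $\operatorname{mex}(\sigma)$ odd (resp.\ even). Konan's length-preserving bijection (Lemmas~\ref{lem:K_0} and~\ref{lem:K_1}, with the $((1),\mu)$ adjustment in the complementary case) exchanges ``$\operatorname{mex}$ odd'' for ``no $i$ with $\rho_i=i$'', and ``$\operatorname{mex}$ even'' for ``exactly one such $i$''.

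Your Step 2 would have to supply some equivalent of this mex-parity/fixed-point exchange. Without it, the proposal only restates the lemma in conjugate form.
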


In \cite{CZ_26}, Chen and Zou studied $F(n)$ and established identities expressing $F_2(n)$ and $F_3(n)$ ($F_0(n)$ and $F_1(n)$ in their notation), respectively, in terms of overpartitions. Written in the notation of Definition~\ref{def:F-refinements}, the relevant identities are as follows.

\begin{theorem}[{\cite[Theorem 1.3(a), (b), and (c)]{CZ_26}}]\label{thm:PO}
For every positive integer $n$,
\begin{enumerate}[label=\textnormal{(\alph*)},leftmargin=2.2em]
\item $F(n)=\overline{p}(n)$;
\item $F_2(n)=\frac{1}{2}\bigl(\overline{p}(n)+\overline{p}_o(n)\bigr)$;
\item $F_3(n)=\frac{1}{2}\bigl(\overline{p}(n)-\overline{p}_o(n)\bigr)$.
\end{enumerate}
\end{theorem}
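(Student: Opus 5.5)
We prove the three parts of Theorem \ref{thm:PO} combinatorially, in the spirit of the paper. Part (a) comes from a weight-preserving bijection, and parts (b) and (c) come from a parity-reversing involution. Since (b) and (c) add to (a), it suffices to prove (a) together with the difference identity
\begin{equation*}
F_2(n)-F_3(n)=\overline{p}_o(n).
\end{equation*}
Analytically this reads $\frac{1}{(q;q^2)_\infty^2(-q^2;q^2)_\infty}=\frac{(-q;q^2)_\infty}{(q;q^2)_\infty}$. It follows from $(q;q^2)_\infty(-q;q)_\infty=1$, i.e.\ Euler's theorem, and the argument will follow this derivation.

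\textbf{Step 1 (part (a)).} Take $\pi\in\mathcal F(n)$. We split it into two pieces:
\begin{itemize}
\item its blue parts, which form an ordinary partition $\beta$ of some $k$;
\item its red parts, which form a partition $\rho$ into odd parts of $n-k$.
\end{itemize}
Apply Glaisher's bijection (merging equal pairs repeatedly) to send $\rho$ to a partition $\delta$ of $n-k$ into distinct parts. The pair $(\delta,\beta)$ is then an overpartition of $n$: the parts of $\delta$ are the overlined parts and the parts of $\beta$ are the non-overlined parts. This map is weight-preserving and bijective, since every overpartition splits uniquely into its overlined and non-overlined parts. This proves $F(n)=\overline{p}(n)$.

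\textbf{Step 2 (the difference identity).} Now record the sign $(-1)^{e(\pi)}$. Write $\pi$ as a triple:
\begin{itemize}
\item the red odd parts $\rho$;
\item the blue odd parts $\sigma$;
\item the blue even parts $\varepsilon=2\mu$, where $\mu$ is an arbitrary partition.
\end{itemize}
The sign depends only on $\ell(\mu)$. Apply Glaisher's bijection to $\rho$ to get a distinct partition $\delta$. Write $\delta=\delta^{o}\cup\delta^{e}$, splitting it into its odd parts and its even parts. Then $\delta^{e}=2\nu$ with $\nu$ distinct.

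Next, on the pair $(\nu,\mu)$, with $\nu$ distinct and $\mu$ unrestricted, use the standard sign-reversing involution that proves $(-q;q)_\infty(q;q)_\infty$-type cancellations. The signed count here is $\frac{(-q^2;q^2)_\infty}{(-q^2;q^2)_\infty}$ after sign. More precisely, we want to cancel the weight $1/(-q^2;q^2)_\infty$ coming from $\mu$ with sign $(-1)^{\ell(\mu)}$ against the distinct even parts $\nu$. Define the involution as follows:
\begin{itemize}
\item let $s$ be the smallest part appearing in $\nu\cup\mu$;
\item if $s$ appears in $\nu$, move it to $\mu$;
\item otherwise move one copy of $s$ from $\mu$ to $\nu$.
\end{itemize}
This changes $\ell(\mu)$ by one, so it reverses the sign. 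The fixed point is the unique case $\nu=\mu=\emptyset$. We must still check the case in which $s$ already lies in both $\nu$ and $\mu$; there we adopt the convention "move from $\nu$ to $\mu$", and its inverse is "move from $\mu$ to $\nu$ when $s\notin\nu$". This is consistent, because $\nu$ is distinct.

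The surviving objects are the triples $(\delta^{o},\sigma)$ with $\delta^{o}$ distinct odd and $\sigma$ odd unrestricted, all with sign $+$. These are exactly the elements of $\overline{\mathcal P}_o(n)$: $\delta^{o}$ gives the overlined parts and $\sigma$ the non-overlined parts. Hence $F_2(n)-F_3(n)=\overline{p}_o(n)$, and combining with Step 1 yields (b) and (c).

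\textbf{Main obstacle.} The delicate point is Step 2. We must verify that the composite map (Glaisher followed by the involution on $(\nu,\mu)$) is a genuine involution on $\mathcal F(n)$ that preserves weight, and that its fixed points are exactly the partitions with no even red-derived parts and no blue even parts. If the convention on the smallest part causes trouble, we will use instead the alternative "compare the smallest part of $\nu$ with the smallest part of $\mu$, moving the smaller one, with ties sent from $\nu$ to $\mu$", and check the inverse direction explicitly.
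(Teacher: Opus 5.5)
Your proof is correct, and it differs from the paper's in two ways.

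\textbf{Part (a).} You prove (a) directly: red parts are odd, blue parts are arbitrary, and Glaisher's map turns the red parts into the overlined parts. The paper only cites Chen--Zou for (a).

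\textbf{The difference identity.} You reach the same fixed-point set as the paper, but through a different involution. After Glaisher's map, $\nu=\emptyset$ exactly when every red odd part has multiplicity at most one. So your surviving objects are precisely the paper's $\mathcal G(n)$, and your identification of them with $\overline{\mathcal P}_o(n)$ is the paper's Lemma~\ref{lem:G-po}. The cancelling maps, however, are not the same.
\begin{itemize}
\item You re-encode $\pi$ as $(\delta^{o},\nu,\sigma,\mu)$ and apply the textbook smallest-part involution behind $(-q^2;q^2)_\infty\cdot(-q^2;q^2)_\infty^{-1}=1$. Pulled back to $\pi$, a move exchanges the $2^{a}$ red copies of an odd $u$ ($a\ge1$), recorded by a binary digit of its red multiplicity, with a single blue part $(2^{a}u)_{\mathrm b}$. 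Which move is made is decided by the smallest element of $\nu\cup\mu$.
\item The paper never uses Glaisher's map. It takes the least odd $k$ whose chain $k_{\mathrm r},(2k)_{\mathrm b},(4k)_{\mathrm b},\dots$ is not good, and merges or splits two equal parts at the top of that chain.
\item For example, for $\pi=(8_{\mathrm b},3_{\mathrm r},3_{\mathrm r})$ your map gives $(8_{\mathrm b},6_{\mathrm b})$, while the paper's gives $(4_{\mathrm b},4_{\mathrm b},3_{\mathrm r},3_{\mathrm r})$.
\end{itemize}

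\textbf{What each route buys.} Your route mirrors the $q$-series computation step by step and uses one Glaisher step for both (a) and the difference identity. It also makes involutivity automatic: conjugating an involution by a weight-preserving bijection under which $e(\pi)=\ell(\mu)$ gives a sign-reversing involution. So the ``main obstacle'' you flag is not a real one. The paper's route stays inside $\mathcal F(n)$ and uses only local halving and doubling moves, at the cost of a three-case check that its map is an involution.
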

Chen and Zou \cite{CZ_26} also gave a combinatorial proof of Theorem \ref{thm:PO}\textnormal{(a)} by constructing an explicit bijection. A combinatorial proof of Theorem \ref{thm:PO}\textnormal{(b), (c)} was recently obtained by Shen \cite{Shen}. In the present paper, we give combinatorial proofs of Theorems \ref{thm:F01} and \ref{thm:F23}, and we present another combinatorial proof of Theorem \ref{thm:PO}\textnormal{(b), (c)}. We do not repeat the known combinatorial proof of Theorem \ref{thm:PO}\textnormal{(a)}.

We next turn to the distinct-part analogue. The following definition records the underlying class from \cite[Definition 1]{AM_arxix}.

\begin{definition}[{\cite[Definition 1]{AM_arxix}}]\label{def:E}
Let $n$ be a nonnegative integer and let $E(n)$ be the number of two-color partitions of $n$ in which parts are distinct and the even parts may occur only in blue.
\end{definition}

In this paper, $\mathcal E(n)$ denotes the set counted by $E(n)$; thus $E(n)=|\mathcal E(n)|$. Equivalently,
\begin{equation*}
\sum_{n\ge0}E(n)q^n=(-q;q^2)_\infty^2(-q^2;q^2)_\infty.
\end{equation*}
The identity relating this class to overpartitions into odd parts is the following; a combinatorial proof was given by Chen and Zou \cite{CZ_26}. We shall use this identity as a known result and will not reprove it here.

\begin{theorem}[{\cite[Theorem 1.1(a)]{CZ_26}}]\label{thm:E-total}
For every nonnegative integer $n$, one has $E(n)=\overline{p}_o(n)$.
\end{theorem}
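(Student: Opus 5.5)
The plan is to build an explicit weight-preserving bijection between $\mathcal E(n)$ and $\overline{\mathcal P}_o(n)$, with a generating-function check underneath it. Start from the product forms. Since $(-q^2;q^2)_\infty=1/(q^2;q^4)_\infty$ by Euler's theorem,
\begin{equation*}
(-q;q^2)_\infty^2(-q^2;q^2)_\infty=(-q;q^2)_\infty\cdot\frac{(-q;q^2)_\infty}{(q^2;q^4)_\infty},
\end{equation*}
and $(-q;q^2)_\infty(q;q^2)_\infty=(q^2;q^4)_\infty$, so the second factor equals $1/(q;q^2)_\infty$. This gives $\overline{p}_o(n)$, whose generating function is $(-q;q^2)_\infty/(q;q^2)_\infty$. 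The bijection should simply follow these factors.

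For $\pi\in\mathcal E(n)$, split $\pi$ into three pieces. The distinct red odd parts $R$ are kept and become the overlined parts of the image. The distinct blue odd parts $B$ and the distinct (blue) even parts $D$ are used to produce the non-overlined odd parts.

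To handle $D$, apply Glaisher's bijection. Every even part $2m$ in $D$ can be written as $2^{k}\cdot o$ with $k\ge1$ and $o$ odd, and is then replaced by $2^{k}$ copies of $o$. This sends the partitions into distinct even parts to the partitions into odd parts in which every multiplicity is even. Adding back the distinct odd parts $B$, each odd part $o$ ends up with multiplicity $\epsilon+2j$, where $\epsilon\in\{0,1\}$ records whether $o\in B$ and $2j$ is the contribution from $D$. Since every nonnegative integer has a unique such representation, this merged step is a bijection onto unrestricted partitions into odd parts. Concretely, one reads $B$ off from the odd multiplicities and recovers $D$ from the binary expansion of $j$.

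The combined map $\pi\mapsto(\overline{R},\,B\cup \mathrm{Glaisher}(D))$ lands in $\overline{\mathcal P}_o(n)$ and preserves weight. It is invertible because an overpartition in $\overline{\mathcal P}_o(n)$ decomposes uniquely into its overlined distinct odd parts and its non-overlined odd parts. The main point to verify carefully is this uniqueness of the decomposition $\text{mult}=\epsilon+2\cdot(\text{binary sum})$, which ensures that $B$ and $D$ can be recovered. The rest is routine bookkeeping.
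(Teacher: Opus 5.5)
Your proof is correct. The paper does not reprove this statement. It cites Chen and Zou for a combinatorial proof, and in Remark~\ref{rem:gf-identities} it only checks the product identity: it groups $(-q;q^2)_\infty(-q^2;q^2)_\infty=(-q;q)_\infty$ and applies Euler's formula once.

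Your bijection is exactly the combinatorial content of that grouping, and it can be stated more simply. The blue parts of $\lambda\in\mathcal E(n)$, odd and even together, form a single partition into distinct parts. Your merged step is then just Glaisher's bijection applied to that whole partition. The ``$\epsilon+2j$'' uniqueness you single out is simply uniqueness of the binary expansion of each multiplicity, with $\epsilon$ the $2^0$-digit. The digits with $k\ge1$ are the same binary splitting used in the proof of Lemma~\ref{lem:Q-Pe}. The red parts pass unchanged to the overlined parts.

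Compared with the one-line analytic check, your route gives an explicit weight-preserving bijection that also transports a statistic. Red parts correspond exactly to overlined parts, so $r(\lambda)$ equals the number of overlined parts of the image. Consequently Theorem~\ref{thm:E01} would follow at once from the involution on $\overline{\mathcal P}_o(n)$, $n\ge1$, that toggles the overline on the first occurrence of the smallest part value. That is a shorter path than the chain involution of Section~\ref{sec:E01}.
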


\begin{remark}\label{rem:gf-identities}
The identities $F(n)=\overline{p}(n)$ and $E(n)=\overline{p}_o(n)$ are also visible at the level of generating functions. By Euler's formula \cite{A-1998},
\begin{equation*}
(-q;q)_\infty=\frac{1}{(q;q^2)_\infty},
\end{equation*}
and hence
\begin{align*}
\sum_{n\ge0}F(n)q^n
&=\frac{1}{(q;q^2)_\infty^2(q^2;q^2)_\infty}
 =\frac{(-q;q)_\infty}{(q;q)_\infty}
 =\sum_{n\ge0}\overline{p}(n)q^n,\\
\sum_{n\ge0}E(n)q^n
&=(-q;q^2)_\infty^2(-q^2;q^2)_\infty
 =\frac{(-q;q^2)_\infty}{(q;q^2)_\infty}
 =\sum_{n\ge0}\overline{p}_o(n)q^n.
\end{align*}
\end{remark}

We now introduce the refinement of $E(n)$ used in this paper. It records the parity of the number of red odd parts. Throughout the rest of this paper, the symbols $E_0(n)$ and $E_1(n)$ refer to the following definition.

\begin{definition}\label{def:E-refinements}
For $\lambda\in\mathcal E(n)$, let $r(\lambda)$ be the number of red odd parts of $\lambda$. Define
\begin{align*}
\mathcal E_0(n)&=\{\lambda\in\mathcal E(n): r(\lambda)\equiv0\pmod2\},\\
\mathcal E_1(n)&=\{\lambda\in\mathcal E(n): r(\lambda)\equiv1\pmod2\},
\end{align*}
and let $E_i(n)=|\mathcal E_i(n)|$ for $i=0,1$.
\end{definition}

Our final companion result is the following red-odd-parity refinement of Theorem \ref{thm:E-total}.

\begin{theorem}\label{thm:E01}
For every positive integer $n$,
\begin{equation*}
E_0(n)=E_1(n)=\frac{\overline{p}_o(n)}{2}.
\end{equation*}
\end{theorem}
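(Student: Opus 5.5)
Since $E_0(n)+E_1(n)=E(n)=\overline{p}_o(n)$ by Theorem \ref{thm:E-total}, it suffices to show $E_0(n)=E_1(n)$ for $n\ge1$. Equivalently, the signed count $\sum_{\lambda\in\mathcal E(n)}(-1)^{r(\lambda)}$ vanishes. At the level of generating functions this is immediate. Attaching $-1$ to each red odd part gives
\begin{equation*}
\sum_{n\ge0}\bigl(E_0(n)-E_1(n)\bigr)q^n=(-q;q^2)_\infty(q;q^2)_\infty(-q^2;q^2)_\infty=(q^2;q^4)_\infty(-q^2;q^2)_\infty .
\end{equation*}
Euler's identity $(-q^2;q^2)_\infty=1/(q^2;q^4)_\infty$ turns the right-hand side into $1$. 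To stay in the spirit of the paper, however, I will turn this computation into an explicit sign-reversing involution.

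The involution must realize the identity $(q^2;q^4)_\infty(-q^2;q^2)_\infty=1$ combinatorially, with the blue odd parts carried along unchanged. First I will pair the red and blue odd parts so that they visibly produce the factor $(q^2;q^4)_\infty$. Then I will cancel that factor against the distinct even parts using the Glaisher/Euler map.

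\textbf{Step 1.} Write $\lambda\in\mathcal E(n)$ as a triple $(B,R,D)$: $B$ is the set of blue odd parts, $R$ the set of red odd parts, and $D$ the set of (blue) even parts. Consider the odd values $k$ lying in both $B$ and $R$, and the odd values lying in exactly one of them. For an odd $k$ in exactly one of $B,R$, toggling its colour changes $r(\lambda)$ by $1$. So I will take the smallest odd $k$ in $B\triangle R$ and toggle its colour. This is an involution that reverses sign, and its fixed points are exactly the $\lambda$ with $B=R$. Each $k\in B=R$ contributes weight $2k$ and sign $(-1)$, and these terms produce precisely $(q^2;q^4)_\infty$.

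\textbf{Step 2.} The remaining objects are pairs $(S,D)$. Here $S$ is a set of parts $2k\equiv 2\pmod 4$ carrying sign $(-1)^{|S|}$, and $D$ is a set of distinct even parts carrying sign $+1$. Dividing by $2$, these become pairs $(S',D')$, where $S'$ is a set of distinct odd parts with sign $(-1)^{|S'|}$ and $D'$ is a set of distinct parts, with total weight $n/2$. Apply Glaisher's bijection to convert $D'$ into a partition $O'$ into odd parts. The problem becomes cancelling the pairs (distinct odd set $S'$ with sign, odd partition $O'$) of weight $m=n/2\ge1$. To do this, take the smallest odd part $j$ appearing in either $S'$ or $O'$. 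If $j\in S'$, move one copy of $j$ into $O'$. Otherwise $j$ occurs in $O'$ but not in $S'$, and I move one copy of $j$ into $S'$. Because $j$ is the smallest such part, this move is consistent and is its own inverse. It flips the sign, and its only fixed point is the empty pair. Composing Steps 1 and 2, with the Step 2 involution applied only to the fixed points of Step 1, gives a sign-reversing involution on $\mathcal E(n)$ with no fixed points for $n\ge1$. This completes the plan.

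The main obstacle is checking that the Step 2 involution really is well defined. The choice of $j$ must not change after the move, which holds because the multiset $S'\cup O'$ is preserved. It also has to be confirmed that the composition of the two involutions is still an involution on all of $\mathcal E(n)$. That requires Step 2 to preserve the fixed-point structure of Step 1, which it does, since Step 2 acts only on the data $S,D$ of partitions with $B=R$.
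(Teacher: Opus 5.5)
Your proof is correct, but it is organized differently from the paper's. The paper uses one local rule. It takes the least odd $v$ whose chain $v_{\mathrm r},v_{\mathrm b},(2v)_{\mathrm b},(4v)_{\mathrm b},\ldots$ meets $\lambda$. If exactly one of $v_{\mathrm r},v_{\mathrm b}$ occurs, it recolours $v$. If both occur, it merges $v_{\mathrm r}+v_{\mathrm b}+(2v)_{\mathrm b}+\cdots+(2^{a-1}v)_{\mathrm b}$ into $(2^av)_{\mathrm b}$. If neither occurs, it splits the lowest $(2^cv)_{\mathrm b}$ in the reverse way.

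Your construction has two layers, following the factorizations $(-q;q^2)_\infty(q;q^2)_\infty=(q^2;q^4)_\infty$ and then $(q^2;q^4)_\infty(-q^2;q^2)_\infty=1$. The two involutions in fact agree on the set $\{B=R\}$. Unwinding Glaisher's map, moving $j$ from $S'$ to $O'$ raises the multiplicity of $j$ in $O'$ by one. The binary carry then deletes $(2j)_{\mathrm b},\ldots,(2^{t}j)_{\mathrm b}$ and creates $(2^{t+1}j)_{\mathrm b}$. Together with deleting $j_{\mathrm r},j_{\mathrm b}$, this is exactly the paper's Case 2, the reverse move is Case 3, and $j$ is chosen the same way.

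The maps differ when $B\neq R$. You recolour the least \emph{unmatched} odd value, while the paper acts on the least nonempty chain even if its odd value is matched. For example, $\{3_{\mathrm r},1_{\mathrm r},1_{\mathrm b}\}$ goes to $\{3_{\mathrm b},1_{\mathrm r},1_{\mathrm b}\}$ under your map and to $\{3_{\mathrm r},2_{\mathrm b}\}$ under the paper's.

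Your version buys transparency and modularity. It is visibly the combinatorial shadow of the generating-function computation. Each layer is a standard involution: a colour toggle on $B\triangle R$, and the classical cancellation for $(q;q^2)_\infty\cdot(q;q^2)_\infty^{-1}=1$ conjugated by Glaisher. The only compatibility check needed is that Step 2 preserves the fixed set of Step 1, and you made it. The paper's version gives a single rule with no auxiliary bijection. Both proofs rely on Theorem~\ref{thm:E-total} for the final count.

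One detail is worth stating explicitly. For odd $n$, Step 1 has no fixed points at all, since every fixed point has even weight $2\sum_{k\in B}k+\sum_{d\in D}d$. So the argument via $m=n/2\ge1$ is only needed for even $n$.
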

We prove this refinement by constructing a direct parity-reversing involution on $\mathcal E(n)$. Since
\[
\mathcal E(n)=\mathcal E_0(n)\,\dot\cup\,\mathcal E_1(n),
\]
and Theorem~\ref{thm:E-total} gives $E(n)=\overline p_o(n)$, this involution implies the stated formula for $E_0(n)$ and $E_1(n)$.

The paper is organized as follows. In Section \ref{sec:F01}, we give a combinatorial proof of Theorem \ref{thm:F01}. In Section \ref{sec:F23}, we prove Lemma \ref{lem:KLW 1}, Theorem \ref{thm:F23}, and then give a combinatorial proof of Theorem \ref{thm:PO}\textnormal{(b), (c)} using the sign-reversing involution from the first version of our argument. In Section \ref{sec:E01}, we prove Theorem \ref{thm:E01} by a direct parity-reversing involution on $\mathcal E(n)$.

\section{Proof of Theorem \ref{thm:F01}}\label{sec:F01}

We first recall the auxiliary class that appears naturally in the parity-reversing involution.  For a colored partition \(\lambda\), write \(\operatorname{mult}_{u_{\mathrm r}}(\lambda)\) and \(\operatorname{mult}_{u_{\mathrm b}}(\lambda)\) for the multiplicities of the red and blue copies of \(u\), respectively.

\begin{definition}\label{def:Q}
Let \(\mathcal Q(n)\) be the set of partitions \(\lambda\in\mathcal F(n)\) such that every blue part is even and every red odd part occurs with even multiplicity.  Let \(Q(n)=|\mathcal Q(n)|\).
\end{definition}

Thus \(\mathcal Q(n)\subseteq\mathcal F_0(n)\).  We shall identify \(\mathcal Q(n)\) with a simple class of overpartitions into even parts.

\begin{definition}\label{def:Pe}
Let \(\overline{\mathcal P}_e(n)\) be the set of overpartitions of \(n\) in which the overlined parts are distinct even integers and the non-overlined parts are unrestricted even integers.  Let \(\overline p_e(n)=|\overline{\mathcal P}_e(n)|\).
\end{definition}

\begin{lemma}\label{lem:Q-Pe}
For every nonnegative integer \(n\),
\[
Q(n)=\overline p_e(n).
\]
\end{lemma}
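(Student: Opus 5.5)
A partition \(\lambda\in\mathcal Q(n)\) consists of two independent pieces. The first is a multiset of blue even parts, with arbitrary multiplicities. The second is a multiset of red odd parts, each occurring with even multiplicity. The only blue parts allowed are even, and even parts cannot be red, so nothing else can appear. The generating function is therefore
\[
\sum_{n\ge0}Q(n)q^n=\frac{1}{(q^2;q^2)_\infty}\cdot\frac{1}{(q^2;q^4)_\infty},
\]
while \(\sum_{n\ge0}\overline p_e(n)q^n=(-q^2;q^2)_\infty/(q^2;q^2)_\infty\). By Euler's formula with \(q\mapsto q^2\), we have \((-q^2;q^2)_\infty=1/(q^2;q^4)_\infty\), so the two series agree. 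I plan to make this combinatorial by matching the two pieces separately.

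The bijection \(\Phi:\mathcal Q(n)\to\overline{\mathcal P}_e(n)\) goes as follows. Keep the blue even parts as the non-overlined parts of the image. For the red odd parts, pair up equal copies: a red odd part \(u\) with multiplicity \(2k\) becomes \(k\) parts equal to \(2u\). This gives a partition \(\mu\) into parts of the form \(2\cdot(\text{odd})\), with unrestricted multiplicities. Then apply Glaisher's bijection, scaled by \(2\), to \(\mu\). For each odd \(u\), write the multiplicity of \(2u\) in binary as \(\sum_j 2^{j}\). Each binary digit contributes one distinct part \(2^{j+1}u\). The result is a partition \(\nu\) into distinct even parts, and we declare all parts of \(\nu\) overlined. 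The image is then the overpartition whose overlined parts are \(\nu\) and whose non-overlined parts are the blue even parts. Since the overlined parts are distinct even integers and the non-overlined parts are even, the image lies in \(\overline{\mathcal P}_e(n)\). Weight is preserved at every step.

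For the inverse, take \(\sigma\in\overline{\mathcal P}_e(n)\). Send its non-overlined parts back to blue even parts. Apply inverse Glaisher to its overlined distinct even parts: write each as \(2^{a}u\) with \(a\ge1\) and \(u\) odd, and split it into \(2^{a-1}\) copies of \(2u\). Finally, replace each \(2u\) by two red copies of \(u\). This recovers an element of \(\mathcal Q(n)\), and the two maps are mutually inverse because Glaisher's map is a bijection.

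The only point needing care is the convention for overpartitions. An overlined part and non-overlined copies of the same value may coexist, and the first occurrence is the one overlined. So \(\sigma\) really is an arbitrary pair (distinct even set, unrestricted even multiset), and this pairing matches the product form of the generating function. No genuine obstacle is expected.
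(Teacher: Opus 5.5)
Your proof is correct and is essentially the paper's bijection: pairing the red copies of $u$ into parts $2u$ and then applying the binary splitting of Glaisher's map is the same as the paper's step of writing the even multiplicity $M_u$ of $u_{\mathrm r}$ as $\sum_{a\ge1}\varepsilon_{u,a}2^a$ and inserting $\overline{2^a u}$, whose inverse splits $\overline{2^a u}$ into $2^a$ red copies of $u$. Your generating-function check is an extra sanity check that the paper does not include.
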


\begin{proof}
We construct mutually inverse weight-preserving maps
\[
\phi:\overline{\mathcal P}_e(n)\longrightarrow\mathcal Q(n),
\qquad
\psi:\mathcal Q(n)\longrightarrow\overline{\mathcal P}_e(n).
\]
Let \(\omega\in\overline{\mathcal P}_e(n)\).  The image \(\phi(\omega)=\lambda\) is obtained as follows.

\begin{enumerate}[label=\textnormal{(\roman*)},leftmargin=2.5em]
\item Each non-overlined even part \(m\) of \(\omega\) is sent to one blue part \(m_{\mathrm b}\).
\item Each overlined even part \(\overline m\) is written uniquely as
\[
m=2^a u,
\qquad a\ge1,
\qquad u\ \text{odd}.
\]
Replace \(\overline m\) by \(2^a\) red copies of \(u\), namely by \(u_{\mathrm r}^{2^a}\).  Equivalently, this is obtained by repeatedly splitting \(\overline m\) into two equal halves until all resulting parts are odd, and then coloring the resulting odd parts red.
\end{enumerate}

All blue parts produced are even.  Moreover, each overlined even part contributes an even number of red copies of an odd integer, so every red odd part in \(\lambda\) has even multiplicity.  Hence \(\lambda\in\mathcal Q(n)\), and the total weight is unchanged.

Conversely, let \(\lambda\in\mathcal Q(n)\).  The image \(\psi(\lambda)=\omega\) is constructed as follows.

\begin{enumerate}[label=\textnormal{(\roman*)},leftmargin=2.5em]
\item Each blue even part \(m_{\mathrm b}\) of \(\lambda\) is sent to a non-overlined part \(m\).
\item For each odd integer \(u\), let \(M_u=\operatorname{mult}_{u_{\mathrm r}}(\lambda)\).  Since \(\lambda\in\mathcal Q(n)\), \(M_u\) is even.  Write its binary expansion in the form
\[
M_u=\sum_{a\ge1}\varepsilon_{u,a}2^a,
\qquad \varepsilon_{u,a}\in\{0,1\}.
\]
For every \(a\) with \(\varepsilon_{u,a}=1\), insert the overlined even part \(\overline{2^a u}\) into \(\omega\).  This is the same operation as repeatedly merging two equal red parts into one part of twice the size until no repeated overlined part remains.
\end{enumerate}

The overlined parts obtained in this way are distinct, by uniqueness of binary expansion, and all parts of \(\omega\) are even.  The two constructions are inverse to each other, because the decomposition \(m=2^a u\) with \(u\) odd and the binary expansion of each \(M_u\) are unique.  Therefore \(\phi\) is a bijection, and \(Q(n)=\overline p_e(n)\).
\end{proof}

\begin{lemma}\label{lem:F01-difference}
For every nonnegative integer \(n\),
\[
F_0(n)-\overline p_e(n)=F_1(n).
\]
\end{lemma}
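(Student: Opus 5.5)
We will construct a parity-reversing involution $\iota$ on $\mathcal F(n)\setminus\mathcal Q(n)$. It will change $r(\pi)$ by exactly one, so it swaps $\mathcal F_0(n)\setminus\mathcal Q(n)$ with $\mathcal F_1(n)$. Since $\mathcal Q(n)\subseteq\mathcal F_0(n)$, this gives $F_0(n)-Q(n)=F_1(n)$, and Lemma \ref{lem:Q-Pe} replaces $Q(n)$ by $\overline p_e(n)$.

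For $\pi\in\mathcal F(n)$, consider the odd integers $u$ for which $\pi$ fails the defining conditions of $\mathcal Q$ at $u$. This happens when $\operatorname{mult}_{u_{\mathrm b}}(\pi)>0$ or $\operatorname{mult}_{u_{\mathrm r}}(\pi)$ is odd. Blue even parts and even red multiplicities are always allowed in $\mathcal Q$. Hence $\pi\notin\mathcal Q(n)$ exactly when such a $u$ exists; let $u_0$ be the smallest one.

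The involution acts only on the copies of $u_0$. Write $b=\operatorname{mult}_{u_{0,\mathrm b}}(\pi)$ and $c=\operatorname{mult}_{u_{0,\mathrm r}}(\pi)$, and recolor one copy:
\begin{itemize}
\item if $c$ is odd, recolor one red $u_0$ blue, giving $(b+1,c-1)$;
\item if $c$ is even (so $b\ge1$ by the choice of $u_0$), recolor one blue $u_0$ red, giving $(b-1,c+1)$.
\end{itemize}
The weight and all other parts are unchanged, and $r$ changes by $\pm1$.

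The step needing care is checking that $\iota$ is a well-defined involution, that is, that $u_0$ is still the smallest bad odd integer after the move. Smaller odd integers are untouched, so it suffices that $u_0$ itself stays bad.
\begin{itemize}
\item In the first case the new pair is $(b+1,c-1)$ with $b+1\ge1$, so $u_0$ is still bad. Because $c-1$ is even, the map then applies its second rule and returns $(b,c)$.
\item In the second case the new pair is $(b-1,c+1)$ with $c+1$ odd, so $u_0$ is still bad. The map then applies its first rule and returns $(b,c)$.
\end{itemize}
Thus $\iota^2=\mathrm{id}$ on $\mathcal F(n)\setminus\mathcal Q(n)$, and $\iota$ has no fixed points. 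This completes the planned proof.
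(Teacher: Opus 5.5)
Your proposal is correct and uses the same involution as the paper. The paper's set $\pi_1$ consists of the blue odd parts together with one red copy of each odd-multiplicity red part; when $\pi_1$ is ordered increasingly with red before blue, its first part is exactly your $u_0$, red if $c$ is odd and blue otherwise, and recoloring one copy of it is your rule. Your formulation in terms of the pair $(b,c)$ states more explicitly than the paper's sketch why $u_0$ remains the least bad value, and hence why $\iota^2=\mathrm{id}$.
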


\begin{proof}
By Lemma \ref{lem:Q-Pe}, it suffices to prove
\[
F_0(n)-Q(n)=F_1(n).
\]
We construct an involution on
\(\mathcal F(n)\setminus\mathcal Q(n)\) which changes the parity of the number of red odd parts.

Let \(\pi_1=\emptyset\) and \(\pi_2=\emptyset\).  For
\[
\lambda=(\lambda_1^{c_1,i_1},\lambda_2^{c_2,i_2},\ldots,\lambda_t^{c_t,i_t})
\in \mathcal F(n)\setminus\mathcal Q(n),
\]
where \(c_j\in\{\mathrm r,\mathrm b\}\) and \(i_j\) is the multiplicity of the part \(\lambda_j\) in color \(c_j\), define \(\Phi(\lambda)=\lambda'\) by the following four steps.

\medskip
\noindent\textnormal{(S1)} For each \(j\in\{1,2,\ldots,t\}\), distribute the parts of \(\lambda\) into \(\pi_1\) and \(\pi_2\) as follows:
\begin{enumerate}[label=\textnormal{(\roman*)},leftmargin=2.5em]
\item if \(c_j=\mathrm b\) and \(\lambda_j\) is odd, put all \(i_j\) copies of \((\lambda_j)_{\mathrm b}\) into \(\pi_1\);
\item if \(c_j=\mathrm b\) and \(\lambda_j\) is even, put all \(i_j\) copies of \((\lambda_j)_{\mathrm b}\) into \(\pi_2\);
\item if \(c_j=\mathrm r\) and \(i_j\) is odd, put one copy of \((\lambda_j)_{\mathrm r}\) into \(\pi_1\) and put the remaining \(i_j-1\) copies into \(\pi_2\);
\item if \(c_j=\mathrm r\) and \(i_j\) is even, put all \(i_j\) copies of \((\lambda_j)_{\mathrm r}\) into \(\pi_2\).
\end{enumerate}
Here the red parts are necessarily odd, since even parts occur only in blue.  Since \(\lambda\notin\mathcal Q(n)\), either a blue odd part occurs in \(\lambda\), or a red odd part occurs with odd multiplicity.  Hence \(\pi_1\neq\emptyset\).

\medskip
\noindent\textnormal{(S2)} Rearrange \(\pi_1\) in the ordered form
\[
\pi_1=(\mu_1^{d_1,k_1},\mu_2^{d_2,k_2},\ldots,\mu_s^{d_s,k_s}),
\]
where the parts are ordered increasingly by size, and red parts precede blue parts when their sizes are equal.

\medskip
\noindent\textnormal{(S3)} Change the color of exactly one copy of the first part \(\mu_1^{d_1}\).  Denote the resulting colored partition by \(\pi_1'\).

\medskip
\noindent\textnormal{(S4)} Combine \(\pi_1'\) with \(\pi_2\), and then rearrange the result into ordered form.  The resulting colored partition is denoted by \(\lambda'\).  We set
\[
\Phi(\lambda)=\lambda'.
\]

The construction preserves weight and maps \(\mathcal F(n)\setminus\mathcal Q(n)\) to itself: a selected red part becomes a blue odd part, or a selected blue part becomes a red part of odd multiplicity.  If \(m\) is the value selected in \textnormal{(S3)}, then all smaller candidates remain unchanged; hence the next application selects the same value with the opposite color and changes it back. Thus \(\Phi^2=\mathrm{id}\). The number of red odd parts changes by one, so \(\Phi\) pairs \(\mathcal F_0(n)\setminus\mathcal Q(n)\) with \(\mathcal F_1(n)\). Hence
\[
F_0(n)-Q(n)=F_1(n),
\]
and the lemma follows.
\end{proof}

\begin{proof}[Proof of Theorem \ref{thm:F01}]
Every partition in \(\mathcal F(n)\) is counted by exactly one of \(F_0(n)\) and \(F_1(n)\), so
\[
F_0(n)+F_1(n)=F(n).
\]
By Lemma \ref{lem:F01-difference},
\[
F_0(n)-F_1(n)=\overline p_e(n).
\]
Solving these two equations gives
\[
F_0(n)=\frac{F(n)+\overline p_e(n)}2,
\qquad
F_1(n)=\frac{F(n)-\overline p_e(n)}2.
\]
The identity \(F(n)=\overline p(n)\) is Theorem \ref{thm:PO}\textnormal{(a)} for \(n>0\), and is immediate for \(n=0\).  Moreover, dividing every part of an overpartition in \(\overline{\mathcal P}_e(n)\) by \(2\) gives an overpartition of \(n/2\), and this operation is reversible.  Hence
\[
\overline p_e(n)=\overline p(n/2),
\]
with the convention that this number is \(0\) when \(n\) is odd.  Substitution yields
\[
F_0(n)=\frac{\overline p(n)+\overline p(n/2)}2,
\qquad
F_1(n)=\frac{\overline p(n)-\overline p(n/2)}2.
\]
This proves Theorem \ref{thm:F01}.
\end{proof}

\section{Proofs of Theorems \ref{thm:F23} and \ref{thm:PO}\textnormal{(b), (c)}}\label{sec:F23}

Let \(\mathcal P(n)\) be the set of ordinary partitions of \(n\). For \(\alpha\in\mathcal P(n)\), define the minimal odd excludant \(\operatorname{omex}(\alpha)\) to be the least positive odd integer that does not occur as a part of \(\alpha\).

\subsection{A bijective proof of Lemma~\ref{lem:KLW 1}}
\begin{definition}\cite[p.2]{konan}
  Let $j\in\mathbb{Z}_{\geq0}$. For $\lambda\in\mathcal{P}$, the $j$-mex of $\lambda$, denoted $\operatorname{mex}_j(\lambda)$, is the smallest integer greater than $j$ that is not a part of $\lambda$. Denote by $\mathcal{M}_j$ the set of partitions whose $j$-mex has parity different from that of $j$.
  Denote by $\mathcal{F}_j$ the set of partitions $\lambda$ such that $j\notin\{\lambda_i-i:i\in\{1,\cdots,\ell(\lambda)\}\}$.
\end{definition}

\begin{lemma}\cite[Theorem 1.10]{konan}\label{lem:K_0}
  For $j\in\mathbb{Z}_{\geq0}$, at a fixed weight, the number of partitions in $\mathcal{M}_j$ is equal to the number of partitions in $\mathcal{F}_j$.
\end{lemma}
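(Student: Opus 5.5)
We plan to prove Lemma~\ref{lem:K_0} by showing that the two families $\{\mathcal M_j\}_{j\ge0}$ and $\{\mathcal F_j\}_{j\ge0}$ satisfy the same recursion in $j$, given by weight-preserving maps, and then iterating. Write $M_j(q)$ and $G_j(q)$ for the generating functions of $\mathcal M_j$ and $\mathcal F_j$, and $P(q)=1/(q;q)_\infty$. The target is the pair of identities
\[
P(q)-M_j(q)=q^{j+1}M_{j+1}(q),\qquad P(q)-G_j(q)=q^{j+1}G_{j+1}(q),
\]
where each identity will come from a bijection.

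\emph{The $\mathcal M$ side.} Let $\lambda\notin\mathcal M_j$, so $\operatorname{mex}_j(\lambda)\equiv j\pmod 2$. Then $\operatorname{mex}_j(\lambda)\neq j+1$, so $j+1$ is a part of $\lambda$. Remove one copy of $j+1$ to get $\lambda^-$. The parts larger than $j+1$ are unchanged, so $\operatorname{mex}_{j+1}(\lambda^-)=\operatorname{mex}_j(\lambda)$. This has parity different from $j+1$, hence $\lambda^-\in\mathcal M_{j+1}$. Conversely, inserting a part $j+1$ into any $\mu\in\mathcal M_{j+1}$ gives $\operatorname{mex}_j=\operatorname{mex}_{j+1}(\mu)\equiv j\pmod 2$. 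This gives the first recursion.

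\emph{The $\mathcal F$ side.} For each $\lambda$, let $d\ge0$ be the largest index with $\lambda_d\ge d+j$, which is the $d\times(d+j)$ Durfee rectangle. Since $\lambda_i-i$ is strictly decreasing, $j\in\{\lambda_i-i\}$ holds if and only if $\lambda_d=d+j$, and this requires $d\ge1$. Decomposing $\lambda$ into the rectangle, the part to its right (at most $d$ rows), and the part below it (parts $\le d+j$) yields
\[
P(q)=\sum_{d\ge0}\frac{q^{d(d+j)}}{(q;q)_d(q;q)_{d+j}},\qquad G_j(q)=\sum_{d\ge0}\frac{q^{d(d+j+1)}}{(q;q)_d(q;q)_{d+j}}.
\]
For $G_j$, the right part must have exactly $d$ rows. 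The complement of $\mathcal F_j$ is therefore counted by $\sum_{d\ge1}q^{d(d+j)}/((q;q)_{d-1}(q;q)_{d+j})$. Shifting $d\mapsto d-1$ in the formula for $G_{j+1}$ shows that this sum equals $q^{j+1}G_{j+1}(q)$, because the exponents differ by exactly $j+1$. Combinatorially, the map sends $\lambda$ with $\lambda_d=d+j$ to the partition obtained by deleting the $j+1$ cells at the end of row $d$ and reassembling the pieces with a $(d-1)\times(d+j)$ rectangle. I would state this map explicitly, though the generating-function check already suffices.

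\emph{Conclusion.} Iterating either recursion gives
\[
M_j(q)=G_j(q)=P(q)\sum_{k\ge0}(-1)^k q^{(j+1)+(j+2)+\cdots+(j+k)}.
\]
The iteration converges $q$-adically, since the $k$-th correction term has minimal degree at least $k(j+1)$. Hence $M_j=G_j$ coefficientwise, which is the lemma.

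The main obstacle is the $\mathcal F_j$ step. One must pin down the index $d$ so that the condition $j\in\{\lambda_i-i\}$ reduces to the single equation $\lambda_d=d+j$, and one must get the boundary cases right ($d=0$, and $\lambda_{d+1}\le d+j$ for the rows below). I would handle this by first proving the strict-decrease reduction, and then checking the resulting sums on small weights for $j=0,1$.
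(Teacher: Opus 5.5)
Your proof is correct, and it takes a genuinely different route from the paper. The paper does not prove this lemma at all. It quotes Konan's theorem and, in step (S2) of the proof of Lemma~\ref{lem:KLW 1}, relies on a bijection underlying it that preserves both weight and number of parts.

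You instead peel off one layer on each side. Deleting a part $j+1$ sends the complement of $\mathcal M_j$ onto $\mathcal M_{j+1}$, and the Durfee-rectangle decomposition sends the complement of $\mathcal F_j$ onto $\mathcal F_{j+1}$. So both families satisfy $X_j=P-q^{j+1}X_{j+1}$, and then $D_j=M_j-G_j$ obeys $D_j=-q^{j+1}D_{j+1}$ and must vanish.

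This is short and self-contained, and it produces the explicit series $P(q)\sum_{k\ge0}(-1)^kq^{(j+1)+\cdots+(j+k)}$. It has two limitations:
\begin{itemize}
\item It does not give a direct bijection $\mathcal M_j\to\mathcal F_j$. The recursion only matches complements, so extracting one would need something like the involution principle.
\item As written it ignores length, which is what the paper actually uses.
\end{itemize}

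The length refinement costs you nothing, though. The natural explicit form of your $\mathcal F$-side map is $\lambda\mapsto(\lambda_1+1,\ldots,\lambda_{d-1}+1,\lambda_{d+1},\lambda_{d+2},\ldots)$. Both of your peeling maps lower the number of parts by exactly one. So marking parts by $z$ gives $X_j=1/(zq;q)_\infty-zq^{j+1}X_{j+1}$ on both sides, and hence equality at fixed weight and fixed length.

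For $j=0$, that explicit $\mathcal F$-side map is exactly the inverse of the fixed-point insertion $\mu\mapsto\rho$ in (S2) of the paper's proof of Lemma~\ref{lem:KLW 1}. Your two layer-peeling steps are therefore the same building blocks the paper wraps around Konan's bijection.
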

\begin{lemma}\cite[Corollary 1.11]{konan}\label{lem:K_1}
  For $j\in\mathbb{Z}_{\geq0}$, at a fixed weight,
  the number of partitions
  in $\overline{\mathcal{M}}_j$
  is equal to the number of partitions in $\overline{\mathcal{F}}_j$.
\end{lemma}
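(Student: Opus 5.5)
The plan is to deduce Lemma~\ref{lem:K_1} from Lemma~\ref{lem:K_0} by complementation. I read $\overline{\mathcal M}_j$ and $\overline{\mathcal F}_j$ as the complements of $\mathcal M_j$ and $\mathcal F_j$ in the set $\mathcal P$ of all partitions, as in \cite{konan}. Concretely:
\begin{itemize}
\item $\overline{\mathcal M}_j$ is the set of partitions whose $j$-mex has the same parity as $j$;
\item $\overline{\mathcal F}_j$ is the set of partitions $\lambda$ with $j\in\{\lambda_i-i:1\le i\le\ell(\lambda)\}$.
\end{itemize}
Once this reading is fixed, no new combinatorial construction is needed.

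Fix the weight $n$. First I will note that $\operatorname{mex}_j(\lambda)$ is well defined for every $\lambda\in\mathcal P(n)$, because $\lambda$ has finitely many parts. Hence its parity either differs from that of $j$ or agrees with it, and
\[
\mathcal P(n)=\bigl(\mathcal M_j\cap\mathcal P(n)\bigr)\,\dot\cup\,\bigl(\overline{\mathcal M}_j\cap\mathcal P(n)\bigr).
\]
Likewise, the condition $j\in\{\lambda_i-i\}$ either holds or fails, so
\[
\mathcal P(n)=\bigl(\mathcal F_j\cap\mathcal P(n)\bigr)\,\dot\cup\,\bigl(\overline{\mathcal F}_j\cap\mathcal P(n)\bigr).
\]

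Second, I will subtract. Lemma~\ref{lem:K_0} gives $|\mathcal M_j\cap\mathcal P(n)|=|\mathcal F_j\cap\mathcal P(n)|$. Subtracting this common value from $p(n)=|\mathcal P(n)|$ in both decompositions gives $|\overline{\mathcal M}_j\cap\mathcal P(n)|=|\overline{\mathcal F}_j\cap\mathcal P(n)|$, which is the claim.

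If a bijective version is wanted, I would take a bijection $\beta:\mathcal M_j\cap\mathcal P(n)\to\mathcal F_j\cap\mathcal P(n)$ realizing Lemma~\ref{lem:K_0}. I would then build a bijection between the complements by the standard involution principle (Garsia--Milne), applied to $\beta$ and the identity map of $\mathcal P(n)$.

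The only real obstacle is confirming the meaning of the overline notation, namely that it denotes the complement in $\mathcal P$ at the same weight, together with the boundary case $n=0$. For the empty partition, $\operatorname{mex}_j(\emptyset)=j+1$, so $\emptyset\in\mathcal M_j$. Also $\{\lambda_i-i\}=\emptyset$, so $\emptyset\in\mathcal F_j$. Thus both complements are empty at weight $0$, and the identity holds trivially there.
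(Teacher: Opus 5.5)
Your argument is correct. Reading the overline as complementation inside $\mathcal P(n)$ matches how the paper itself uses $\overline{\mathcal M}_0$ (even mex) and $\overline{\mathcal F}_0$ (partitions with a fixed point) in step (S2) of the proof of Lemma~\ref{lem:KLW 1}. With that reading, the statement follows from Lemma~\ref{lem:K_0} by subtracting from $p(n)$. Your separate weight-$0$ check is harmless but unnecessary, since the subtraction already covers $n=0$.

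The paper gives no proof of its own; it simply cites Konan. However, it describes the ``complementary bijection'' as first producing a pair $((1),\mu)$ with $\mu\in\mathcal F_1$ and $|\mu|=|\sigma|-1$, which points to a different, structural route:
\begin{enumerate}
\item If $\operatorname{mex}_j(\lambda)\equiv j\pmod 2$, then $\operatorname{mex}_j(\lambda)\ge j+2$, so $j+1$ is a part of $\lambda$.
\item Deleting one copy of $j+1$ gives $\mu$ with $\operatorname{mex}_{j+1}(\mu)=\operatorname{mex}_j(\lambda)$, i.e.\ $\mu\in\mathcal M_{j+1}$ of weight $n-j-1$.
\item One applies Lemma~\ref{lem:K_0} at index $j+1$ rather than $j$.
\item Finally one inserts a part creating the (necessarily unique) $j$-fixed point, generalizing the map $\mu\mapsto\rho$ in (S2).
\end{enumerate}

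That route gives an explicit bijection carrying the pair structure the paper describes. Yours is shorter and uses Lemma~\ref{lem:K_0} only at the same index $j$, but it yields a bijection only through the iterative Garsia--Milne construction.

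Your bijection would still serve the paper's application. If the map $\beta$ behind Lemma~\ref{lem:K_0} preserves length, then so does your involution-principle bijection, since it only composes $\beta^{-1}$ with the identity. Every partition in $\overline{\mathcal F}_0$ has exactly one fixed point, so your bijection could be substituted in (S2) without changing (S3)--(S4).
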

For a partition
$$\beta=(\beta_1,\beta_2,\ldots,\beta_{\ell(\beta)})$$
written in decreasing order, define its alternating sum by
$$\operatorname{alt}(\beta)=\beta_1-\beta_2+\beta_3-\beta_4+\cdots
+(-1)^{\ell(\beta)-1}\beta_{\ell(\beta)}.$$

Let $\mathcal O(n,m)$ denote the set of partitions of $n$ into
exactly $m$ odd parts. For $j\in\{0,1\}$, define
\[
\mathcal O_j(n,m)
=
\left\{
\eta\in\mathcal O(n,m):
\operatorname{omex}(\eta)\equiv 2j+1\pmod 4
\right\}.
\]

Let $\mathcal D_j(n,m)$ denote the set of partitions $\beta$ of $n$
into distinct parts such that
\[
\operatorname{alt}(\beta)=m
\qquad\text{and}\qquad
\ell(\beta)\equiv j\pmod 2.
\]

We now give a combinatorial proof of Lemma \ref{lem:KLW 1}.

\begin{proof}[Proof of Lemma \ref{lem:KLW 1}]
Fix $j\in\{0,1\}$. We construct mutually inverse weight-preserving maps
\[
\Phi_j:\mathcal O_j(n,m)\longrightarrow\mathcal D_j(n,m)
\qquad\text{and}\qquad
\Psi_j:\mathcal D_j(n,m)\longrightarrow\mathcal O_j(n,m).
\]
When $n=m=0$, both sets consist only of the empty partition, and we set
$\Phi_j(\emptyset)=\emptyset$. Henceforth, assume that $m>0$.

Let
\[
\eta=(\eta_1,\eta_2,\ldots,\eta_m)\in\mathcal O_j(n,m),
\]
where the parts are written in decreasing order. We construct
$\Phi_j(\eta)=\beta$ in four steps.

\medskip
\noindent\textnormal{(S1)} Define
\[
\sigma=H(\eta)=(\sigma_1,\sigma_2,\ldots,\sigma_m),
\qquad
\sigma_i=\frac{\eta_i+1}{2}\quad(1\le i\le m).
\]
Since every part of $\eta$ is odd, $\sigma$ is an ordinary partition with
\[
\ell(\sigma)=m,
\qquad
|\sigma|=\frac{n+m}{2}.
\]
Moreover,
\[
\operatorname{mex}(\sigma)
=\frac{\operatorname{omex}(\eta)+1}{2}.
\]
Thus $\operatorname{mex}(\sigma)$ is odd when $j=0$ and even when $j=1$.

\medskip
\noindent\textnormal{(S2)} We next apply the length- and weight-preserving bijections underlying Lemmas~\ref{lem:K_0} and~\ref{lem:K_1}.

If $j=0$, then $\sigma\in\mathcal M_0$. Applying the bijection underlying Lemma~\ref{lem:K_0}, we obtain a partition
\[
\rho=(\rho_1,\rho_2,\ldots,\rho_m)\in\mathcal F_0
\]
such that $|\rho|=|\sigma|$. In particular,
\[
\rho_i\ne i\qquad(1\le i\le m).
\]

If $j=1$, then $\sigma\in\overline{\mathcal M}_0$. The complementary bijection underlying Lemma~\ref{lem:K_1} first gives a pair $((1),\mu)$, where
\[
\mu=(\mu_1,\mu_2,\ldots,\mu_{m-1})\in\mathcal F_1,
\qquad
|\mu|=|\sigma|-1.
\]
Since $\mu_i-i\ne1$ for every $i$, let
\[
d=\max\{i:\mu_i>i+1\},
\]
with the convention that $d=0$ if this set is empty, and define
\[
\rho=(\mu_1-1,\mu_2-1,\ldots,\mu_d-1,
       d+1,\mu_{d+1},\mu_{d+2},\ldots,\mu_{m-1}).
\]
The maximality of $d$ and the condition $\mu\in\mathcal F_1$ imply that
\[
\mu_i>i+1\quad(1\le i\le d),
\qquad
\mu_i<i+1\quad(i>d).
\]
Consequently, $\rho$ is a partition of length $m$ satisfying
\[
|\rho|=|\mu|+1=|\sigma|,
\qquad
\rho_{d+1}=d+1,
\]
and $\rho_i\ne i$ for $i\ne d+1$. Thus $\rho\in\overline{\mathcal F}_0$ and has a unique fixed point.

\medskip
\noindent\textnormal{(S3)} Define
\[
\lambda=(\lambda_1,\lambda_2,\ldots,\lambda_m),
\qquad
\lambda_i=2\rho_i-1\quad(1\le i\le m).
\]
Then $\lambda$ is a partition into $m$ odd parts, and
\[
|\lambda|=2|\rho|-m=2|\sigma|-m=n.
\]
If $j=0$, then
\[
\lambda_i\ne2i-1\qquad(1\le i\le m),
\]
whereas if $j=1$, there is a unique index $i$ for which
$\lambda_i=2i-1$.

\medskip
\noindent\textnormal{(S4)} Let
\[
\lambda'=(\lambda_1',\lambda_2',\ldots,\lambda_r')
\]
be the conjugate partition of $\lambda$, and set
\[
I=\max\{i:\lambda_i\ge2i-1\}.
\]
Since the sequence $\lambda_i-(2i-1)$ is strictly decreasing, define
\[
\epsilon=
\begin{cases}
0,&\lambda_I>2I-1,\\
1,&\lambda_I=2I-1.
\end{cases}
\]
The conclusion of \textnormal{(S3)} shows that $\epsilon=0$ when $j=0$ and $\epsilon=1$ when $j=1$.

For $1\le i\le I$, set
\[
\beta_{2i-1}
=\frac{\lambda_i+1}{2}+\lambda'_{2i-1}-2i+1,
\]
and, for $1\le i\le I-\epsilon$, set
\[
\beta_{2i}
=\frac{\lambda_i-1}{2}+\lambda'_{2i+1}-2i+1.
\]
These formulas are the row-length form of the usual fish-hook construction. In particular, the Ferrers diagram of $\lambda$ is decomposed into successive hooks whose sizes are the parts of $\beta$; hence
\[
|\beta|=|\lambda|=n.
\]
For completeness, the strict decrease of the parts follows from
\begin{align*}
\beta_{2i-1}-\beta_{2i}
&=1+\lambda'_{2i-1}-\lambda'_{2i+1}>0
&& (1\le i\le I-\epsilon),\\
\beta_{2i}-\beta_{2i+1}
&=\frac{\lambda_i-\lambda_{i+1}+2}{2}>0
&& (1\le i\le I-1).
\end{align*}
The final part is positive: if $\epsilon=0$, then
$\lambda'_{2I+1}=I$; if $\epsilon=1$, then
$\lambda_I=2I-1$ and $\lambda'_{2I-1}\ge I$. Therefore, $\beta$ is a partition into distinct parts, and
\[
\ell(\beta)=2I-\epsilon\equiv j\pmod2.
\]

It remains to verify the alternating sum. If $\epsilon=0$, then
\begin{align*}
\operatorname{alt}(\beta)
&=\sum_{i=1}^{I}(\beta_{2i-1}-\beta_{2i})\\
&=I+\lambda'_1-\lambda'_{2I+1}
=m.
\end{align*}
If $\epsilon=1$, then
\[
\beta_{2I-1}
=\frac{\lambda_I+1}{2}+\lambda'_{2I-1}-2I+1
=\lambda'_{2I-1}-I+1,
\]
and hence
\begin{align*}
\operatorname{alt}(\beta)
&=\sum_{i=1}^{I-1}(\beta_{2i-1}-\beta_{2i})
  +\beta_{2I-1}\\
&=(I-1)+\lambda'_1-\lambda'_{2I-1}
  +\lambda'_{2I-1}-I+1\\
&=m.
\end{align*}
Thus $\beta\in\mathcal D_j(n,m)$, and $\Phi_j$ is well defined.

We now describe the inverse map. Starting with
$\beta\in\mathcal D_j(n,m)$, reverse the fish-hook construction in
\textnormal{(S4)} to recover $\lambda$. Next set
\[
\rho_i=\frac{\lambda_i+1}{2}
\qquad(1\le i\le m).
\]
When $j=0$, the partition $\rho$ lies in $\mathcal F_0$, and the inverse of the bijection in Lemma~\ref{lem:K_0} recovers $\sigma$. When $j=1$, the unique fixed point of $\rho$ is removed, and $1$ is added to every preceding part; this recovers the pair $((1),\mu)$, after which the inverse complementary bijection in Lemma~\ref{lem:K_1} recovers $\sigma$. Finally, set
\[
\eta_i=2\sigma_i-1
\qquad(1\le i\le m).
\]
This defines a weight-preserving map
\[
\Psi_j:\mathcal D_j(n,m)\longrightarrow\mathcal O_j(n,m).
\]
Each step is the inverse of the corresponding step in the construction of $\Phi_j$. Therefore,
\[
\Psi_j\circ\Phi_j=\operatorname{id}_{\mathcal O_j(n,m)}
\qquad\text{and}\qquad
\Phi_j\circ\Psi_j=\operatorname{id}_{\mathcal D_j(n,m)}.
\]
Hence $\Phi_j$ and $\Psi_j$ are mutually inverse weight-preserving bijections, which proves Lemma~\ref{lem:KLW 1}.
\end{proof}

\subsection{Bijective proofs of Theorems \ref{thm:F23} and \ref{thm:PO}\textnormal{(b), (c)}}

We also write \(\operatorname{mult}_u(\alpha)\) for the multiplicity of the part \(u\) in \(\alpha\).

For \(j\in\{0,1\}\), let
\begin{align*}
\mathcal A_j(n)&=\{\alpha\in\mathcal P(n):\operatorname{omex}(\alpha)\equiv 2j+1\pmod4\},\\
\mathcal B_j(n)&=\{\beta\in\mathcal P(n):\ell(\beta)\equiv j\pmod2\},
\end{align*}
and put \(A_j(n)=|\mathcal A_j(n)|\) and \(B_j(n)=|\mathcal B_j(n)|\). We shall also use the following set notation for the minimal-excludant refinements:
\begin{align*}
\mathcal P_{A,a}(n)&=\{\pi\in\mathcal F(n):\operatorname{mex}_{A,a}(\pi,\mathrm{blue})\equiv a\pmod{2A}\},\\
\overline{\mathcal P}_{A,a}(n)&=\{\pi\in\mathcal F(n):\operatorname{mex}_{A,a}(\pi,\mathrm{blue})\equiv A+a\pmod{2A}\}.
\end{align*}
Thus \(|\mathcal P_{A,a}(n)|=p_{A,a}(n,\mathrm{blue})\) and \(|\overline{\mathcal P}_{A,a}(n)|=\overline p_{A,a}(n,\mathrm{blue})\).

After summing over the statistic \(m\), Lemma \ref{lem:KLW 1} implies that, for each \(N\ge0\) and \(j\in\{0,1\}\), the odd partitions of \(N\) with minimal odd excludant congruent to \(2j+1\pmod4\) are equinumerous with the distinct partitions of \(N\) whose number of parts is congruent to \(j\) modulo \(2\). We fix, once and for all, a weight-preserving correspondence \(\rho_{j,N}\) between these two sets, and denote its inverse by \(\rho_{j,N}^{-1}\).

\begin{lemma}\label{lem:omex-parity}
For every nonnegative integer \(n\) and \(j\in\{0,1\}\),
\[
A_j(n)=B_j(n).
\]
\end{lemma}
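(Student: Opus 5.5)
Here \(\mathcal A_j(n)\) consists of all ordinary partitions of \(n\) with \(\operatorname{omex}\equiv 2j+1\pmod 4\), and \(\mathcal B_j(n)\) of all ordinary partitions of \(n\) with number of parts \(\equiv j\pmod 2\). The plan is to split each partition into an odd-part component and an even-part component, and to apply the fixed correspondence \(\rho_{j,N}\) to the odd component only.

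\emph{Splitting \(\alpha\in\mathcal A_j(n)\).} Write \(\alpha=\alpha_o\cup\alpha_e\), where \(\alpha_o\) collects the odd parts and \(\alpha_e\) the even parts. The minimal odd excludant depends only on the odd parts, so \(\operatorname{omex}(\alpha)=\operatorname{omex}(\alpha_o)\). Hence \(\alpha\mapsto(\alpha_o,\alpha_e)\) is a bijection from \(\mathcal A_j(n)\) onto pairs in which
\[
\alpha_o \text{ is a partition of } N \text{ into odd parts with } \operatorname{omex}\equiv 2j+1\pmod 4,
\qquad
\alpha_e \text{ is a partition of } n-N \text{ into even parts},
\]
with \(N\) ranging over \(0\le N\le n\).

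\emph{Splitting \(\beta\in\mathcal B_j(n)\).} Decompose \(\beta\) by multiplicities. For each part \(u\), write \(\operatorname{mult}_u(\beta)=2q_u+\epsilon_u\) with \(\epsilon_u\in\{0,1\}\). This determines
\[
\beta_d=\text{the distinct parts } u \text{ with } \epsilon_u=1,
\qquad
\beta_p=\text{the parts } 2u \text{ taken with multiplicity } q_u .
\]
Then \(\beta_d\) is a partition into distinct parts, and \(\beta_p\) is a partition into even parts with \(|\beta_d|+|\beta_p|=n\). The map \(\beta\mapsto(\beta_d,\beta_p)\) is a bijection onto such pairs: the inverse splits each part \(2u\) of \(\beta_p\) back into two copies of \(u\). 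This is Glaisher's binary-multiplicity idea with base \(2\).

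\emph{Parity bookkeeping.} \(\ell(\beta)=\ell(\beta_d)+2\sum_u q_u\), so \(\ell(\beta)\equiv\ell(\beta_d)\pmod 2\). Therefore \(\beta\in\mathcal B_j(n)\) exactly when \(\beta_d\) is a distinct partition of \(N=|\beta_d|\) with \(\ell(\beta_d)\equiv j\pmod 2\).

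\emph{The bijection.} Define \(\mathcal A_j(n)\to\mathcal B_j(n)\) by
\[
(\alpha_o,\alpha_e)\mapsto\bigl(\rho_{j,N}(\alpha_o),\,\alpha_e\bigr),
\qquad N=|\alpha_o|,
\]
and then reassemble \(\beta\) via the inverse of the second decomposition. Here \(\rho_{j,N}\) is the correspondence obtained from Lemma \ref{lem:KLW 1} by summing over \(m\). Its inverse uses \(\rho_{j,N}^{-1}\) and the first decomposition. Both maps are weight preserving, which gives \(A_j(n)=B_j(n)\).

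\emph{Main obstacle.} The only delicate point is that the second decomposition is genuinely bijective, in particular that no information is lost when pairs are merged. This holds because \(\epsilon_u\) and \(q_u\) are uniquely determined by \(\operatorname{mult}_u(\beta)\), and conversely \(\operatorname{mult}_u(\beta)=2\operatorname{mult}_{2u}(\beta_p)+[u\in\beta_d]\). Beyond that, the argument only needs the observation that \(\operatorname{omex}\) ignores the even parts.
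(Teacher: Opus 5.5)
Your proof is correct and is essentially the paper's argument: the paper also separates the odd and even parts of $\alpha$, splits each multiplicity of $\beta$ as $2e_u+\varepsilon_u$ into pairs and singletons, applies $\rho_{j,N}$ only to the odd component, and uses $\ell(\beta)\equiv\ell(\delta)\pmod 2$ for the singleton part $\delta$. The only difference is cosmetic: the paper records the even parts of $\alpha$ and the paired copies in $\beta$ as a halved ordinary partition $E$, whereas you keep them as a partition into even parts by merging each pair $u,u$ into $2u$.
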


\begin{proof}
For each fixed \(j\in\{0,1\}\), we define mutually inverse maps
\[
\phi_j:\mathcal A_j(n)\longrightarrow\mathcal B_j(n),
\qquad
\psi_j:\mathcal B_j(n)\longrightarrow\mathcal A_j(n).
\]

Let \(\alpha\in\mathcal A_j(n)\). We construct \(\phi_j(\alpha)=\beta\) as follows.

\smallskip
\noindent\textnormal{(S1)} Separate the even parts and the odd parts of \(\alpha\).

\smallskip
\noindent\textnormal{(S2)} Divide each even part by \(2\), and let \(E\) be the resulting ordinary partition. Let \(\eta\) be the partition formed by the odd parts of \(\alpha\). Then
\[
|\alpha|=2|E|+|\eta|,
\qquad
\operatorname{omex}(\eta)=\operatorname{omex}(\alpha)\equiv2j+1\pmod4.
\]

\smallskip
\noindent\textnormal{(S3)} Apply the fixed correspondence \(\rho_{j,|\eta|}\) to \(\eta\), and write
\[
\delta=\rho_{j,|\eta|}(\eta).
\]
Then \(\delta\) is a distinct partition of \(|\eta|\) and \(\ell(\delta)\equiv j\pmod2\).

\smallskip
\noindent\textnormal{(S4)} Define \(\beta\) by replacing each part of \(E\) by two copies of the same part and then adjoining the parts of \(\delta\). Equivalently, for each positive integer \(u\),
\[
\operatorname{mult}_u(\beta)=2\operatorname{mult}_u(E)+\mathbf 1_{\{u\in\delta\}}.
\]
Hence
\[
|\beta|=2|E|+|\delta|=2|E|+|\eta|=|\alpha|=n
\]
and
\[
\ell(\beta)=2\ell(E)+\ell(\delta)\equiv j\pmod2.
\]
Thus \(\beta\in\mathcal B_j(n)\).

Conversely, let \(\beta\in\mathcal B_j(n)\). We construct \(\psi_j(\beta)=\alpha\) as follows.

\smallskip
\noindent\textnormal{(S1)} For each positive integer \(u\), write
\[
\operatorname{mult}_u(\beta)=2e_u+\varepsilon_u,
\qquad e_u\ge0,
\qquad \varepsilon_u\in\{0,1\}.
\]
Separate the copies of each part \(u\) into \(e_u\) pairs and a possible remaining single copy.

\smallskip
\noindent\textnormal{(S2)} The paired copies form an ordinary partition \(E\), while the remaining single copies form a distinct partition \(\delta\). Thus
\[
|\beta|=2|E|+|\delta|,
\qquad
\ell(\delta)=\ell(\beta)-2\ell(E)\equiv j\pmod2.
\]

\smallskip
\noindent\textnormal{(S3)} Apply \(\rho_{j,|\delta|}^{-1}\) to \(\delta\), and write
\[
\eta=\rho_{j,|\delta|}^{-1}(\delta).
\]
Then \(\eta\) is an odd partition of \(|\delta|\) and
\[
\operatorname{omex}(\eta)\equiv2j+1\pmod4.
\]

\smallskip
\noindent\textnormal{(S4)} Define \(\alpha\) by doubling each part of \(E\) and adjoining the odd parts of \(\eta\). Then
\[
|\alpha|=2|E|+|\eta|=2|E|+|\delta|=|\beta|=n,
\]
and
\[
\operatorname{omex}(\alpha)=\operatorname{omex}(\eta)\equiv2j+1\pmod4.
\]
Thus \(\alpha\in\mathcal A_j(n)\).

The decomposition of \(\alpha\) into even and odd parts, the decomposition of \(\beta\) into paired copies and single copies, and the correspondence \(\rho_{j,N}\) are all reversible. Hence \(\psi_j\circ\phi_j\) and \(\phi_j\circ\psi_j\) are the identity maps on their respective domains. Therefore \(\phi_j\) is a bijection, and \(A_j(n)=B_j(n)\).
\end{proof}

\begin{proof}[Proof of Theorem \ref{thm:F23}]
For \(\pi\in\mathcal F(n)\), let \(e(\pi)\) be the number of even parts of \(\pi\). Since even parts may occur only in blue, this is the number of blue even parts of \(\pi\).

We first prove \eqref{eq:F2-mex}. It is enough to construct a bijection
\[
\Phi:\mathcal F_2(n)\longrightarrow\mathcal P_{4,2}(n).
\]
Let \(\pi\in\mathcal F_2(n)\). We construct \(\Phi(\pi)=\tau\) as follows.

\begin{enumerate}[leftmargin=2.4em]
\item If \(\pi\) has no even part, then \(2\) is the smallest positive integer congruent to \(2\pmod8\) which does not occur as a blue part. Hence \(\tau=\pi\) lies in \(\mathcal P_{4,2}(n)\).

\item Suppose that \(\pi\) has at least one even part. Separate the blue even parts from the colored odd parts of \(\pi\). Divide every blue even part by \(2\), and let \(\alpha\) be the resulting ordinary partition. Let \(\omega\) be the colored odd subpartition left unchanged. Then
\[
|\pi|=2|\alpha|+|\omega|,
\qquad
\ell(\alpha)=e(\pi)\equiv0\pmod2.
\]
Thus \(\alpha\in\mathcal B_0(|\alpha|)\). Applying the inverse map \(\psi_0\) from Lemma \ref{lem:omex-parity} to \(\alpha\), we obtain a partition \(\alpha'\in\mathcal A_0(|\alpha|)\); that is,
\[
|\alpha'|=|\alpha|,
\qquad
\operatorname{omex}(\alpha')\equiv1\pmod4.
\]
Now replace each part \(u\) of \(\alpha'\) by the blue part \((2u)_{\mathrm b}\), and then adjoin the colored odd subpartition \(\omega\). Denote the resulting two-color partition by \(\tau\). Then
\[
|\tau|=2|\alpha'|+|\omega|=2|\alpha|+|\omega|=|\pi|=n,
\]
and
\[
\operatorname{mex}_{4,2}(\tau,\mathrm{blue})
=2\operatorname{omex}(\alpha')
\equiv2\pmod8.
\]
Hence \(\tau\in\mathcal P_{4,2}(n)\).
\end{enumerate}

Conversely, define a map
\[
\Psi:\mathcal P_{4,2}(n)\longrightarrow\mathcal F_2(n)
\]
as follows. Let \(\tau\in\mathcal P_{4,2}(n)\).

\begin{enumerate}[leftmargin=2.4em]
\item If \(\tau\) has no even part, then \(\Psi(\tau)=\tau\). In this case \(e(\tau)=0\), so \(\tau\in\mathcal F_2(n)\).

\item Suppose that \(\tau\) has at least one even part. Separate the blue even parts from the colored odd parts of \(\tau\). Divide every blue even part by \(2\), and let \(\alpha'\) be the resulting ordinary partition. Let \(\omega\) be the colored odd subpartition left unchanged. Since
\[
\operatorname{mex}_{4,2}(\tau,\mathrm{blue})=2\operatorname{omex}(\alpha')
\]
and \(\tau\in\mathcal P_{4,2}(n)\), we have \(\operatorname{omex}(\alpha')\equiv1\pmod4\). Thus \(\alpha'\in\mathcal A_0(|\alpha'|)\). Applying the map \(\phi_0\) from Lemma \ref{lem:omex-parity} to \(\alpha'\), we obtain a partition \(\alpha\in\mathcal B_0(|\alpha'|)\); that is,
\[
|\alpha|=|\alpha'|,
\qquad
\ell(\alpha)\equiv0\pmod2.
\]
Replace each part \(u\) of \(\alpha\) by the blue part \((2u)_{\mathrm b}\), and then adjoin \(\omega\). Denote the resulting partition by \(\pi\). Then \(|\pi|=|\tau|=n\) and
\[
e(\pi)=\ell(\alpha)\equiv0\pmod2.
\]
Therefore \(\pi\in\mathcal F_2(n)\).
\end{enumerate}

The two constructions are inverse to each other, because the colored odd subpartition is kept fixed and the halved even subpartition is transformed by the mutually inverse maps \(\phi_0\) and \(\psi_0\) of Lemma \ref{lem:omex-parity}. Hence \(\Phi\) is a bijection and
\[
F_2(n)=p_{4,2}(n,\mathrm{blue}).
\]

The proof of \eqref{eq:F3-mex} is obtained by the same construction with \(j=1\). More precisely, the maps \(\phi_1\) and \(\psi_1\) in Lemma \ref{lem:omex-parity} give a bijection between \(\mathcal F_3(n)\) and \(\overline{\mathcal P}_{4,2}(n)\), since \(\operatorname{omex}(\alpha')\equiv3\pmod4\) is equivalent to
\[
\operatorname{mex}_{4,2}(\tau,\mathrm{blue})=2\operatorname{omex}(\alpha')\equiv6\pmod8.
\]
Thus
\[
F_3(n)=\overline p_{4,2}(n,\mathrm{blue}).
\]
\end{proof}

We next prove Theorem \ref{thm:PO}\textnormal{(b), (c)}. Let \(\mathcal G(n)\) be the set of partitions in \(\mathcal F(n)\) with no even parts, such that each odd red part occurs at most once, while each odd blue part may occur with arbitrary multiplicity. Let \(g(n)=|\mathcal G(n)|\).

\begin{lemma}\label{lem:G-po}
For every nonnegative integer \(n\),
\[
g(n)=\overline p_o(n).
\]
\end{lemma}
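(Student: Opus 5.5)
This is a direct identification, with no need for Lemma~\ref{lem:KLW 1} or the involutions above. An element of \(\mathcal G(n)\) consists of two parts: a set of distinct red odd parts, and a multiset of blue odd parts with arbitrary multiplicities. An element of \(\overline{\mathcal P}_o(n)\) has the same shape: a set of distinct overlined odd parts, and a multiset of non-overlined odd parts. So the plan is to write down the obvious color-to-overline dictionary and check that it is a weight-preserving bijection.

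We define \(\theta:\mathcal G(n)\to\overline{\mathcal P}_o(n)\) as follows. Each red odd part \(u_{\mathrm r}\) is sent to the overlined part \(\overline u\). Each blue odd part \(u_{\mathrm b}\) is sent to the non-overlined part \(u\), keeping its multiplicity. Since red parts in \(\mathcal G(n)\) occur at most once, the overlined parts produced are distinct odd integers. The non-overlined parts are odd integers with unrestricted multiplicity. Hence \(\theta(\lambda)\in\overline{\mathcal P}_o(n)\), and the weight is clearly preserved. The inverse map \(\overline u\mapsto u_{\mathrm r}\), \(u\mapsto u_{\mathrm b}\) lands in \(\mathcal G(n)\) for the following reasons:
\begin{itemize}
\item no even parts arise, because every part is odd;
\item each red part occurs at most once, because overlined parts are distinct;
\item blue multiplicities are free.
\end{itemize}

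The only point needing care is the convention in the definition of overpartitions. There, an overlined part \(\overline u\) is the first occurrence of \(u\) and may coexist with non-overlined copies of \(u\). This matches a red \(u_{\mathrm r}\) coexisting with blue copies \(u_{\mathrm b}\), which is allowed in \(\mathcal F(n)\). So the two encodings agree even when the same odd value appears in both colors. This is the step I would state explicitly, and it is hardly an obstacle.

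As a sanity check, I would also note the generating-function identity
\[
\sum_{n\ge0}g(n)q^n=\frac{(-q;q^2)_\infty}{(q;q^2)_\infty}=\sum_{n\ge0}\overline p_o(n)q^n.
\]
This confirms the bijective argument and gives \(g(n)=\overline p_o(n)\).
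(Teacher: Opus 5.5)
Your proof is correct and is the same bijection the paper uses: red odd parts $\leftrightarrow$ overlined odd parts, and blue odd parts $\leftrightarrow$ non-overlined odd parts. The paper simply writes the map in the direction $\overline{\mathcal P}_o(n)\to\mathcal G(n)$ and then gives its inverse.
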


\begin{proof}
Define a map
\[
\phi:\overline{\mathcal P}_o(n)\longrightarrow\mathcal G(n)
\]
as follows. For \(\lambda\in\overline{\mathcal P}_o(n)\), let \(\phi(\lambda)=\mu\), where \(\mu\) is constructed by the following rules:
\begin{enumerate}[label=\textnormal{(\arabic*)},leftmargin=2.4em]
\item map each overlined odd part \(\overline u\) of \(\lambda\) to the red part \(u_{\mathrm r}\);
\item map each non-overlined odd part \(u\) of \(\lambda\) to the blue part \(u_{\mathrm b}\).
\end{enumerate}
The overlined parts in an overpartition are distinct, so the red odd parts in \(\mu\) are distinct; the non-overlined parts may occur with arbitrary multiplicity, so the blue odd parts in \(\mu\) may also occur with arbitrary multiplicity. Thus \(\mu\in\mathcal G(n)\), and the weight is preserved.

Conversely, define
\[
\psi:\mathcal G(n)\longrightarrow\overline{\mathcal P}_o(n)
\]
by sending each red odd part \(u_{\mathrm r}\) to the overlined part \(\overline u\), and each blue odd part \(u_{\mathrm b}\) to the non-overlined part \(u\). These two operations are inverse to each other. Hence \(\phi\) is a bijection, and \(g(n)=\overline p_o(n)\).
\end{proof}

\begin{proof}[Proof of Theorem \ref{thm:PO}\textnormal{(b), (c)}]
For \(\pi\in\mathcal F(n)\), let \(e(\pi)\) be the number of even parts of \(\pi\). Then
\begin{equation}\label{eq:signed-sum-F23}
F_2(n)-F_3(n)=\sum_{\pi\in\mathcal F(n)}(-1)^{e(\pi)}.
\end{equation}
We define a sign-reversing involution
\[
\Phi:\mathcal F(n)\setminus\mathcal G(n)\longrightarrow\mathcal F(n)\setminus\mathcal G(n).
\]
Let \(\pi\in\mathcal F(n)\setminus\mathcal G(n)\). For each positive odd integer \(k\), consider the chain
\[
k_{\mathrm r},\quad (2k)_{\mathrm b},\quad (4k)_{\mathrm b},\quad (8k)_{\mathrm b},\ldots .
\]
Here \(k_{\mathrm r}\) denotes a red part \(k\), and \((2^s k)_{\mathrm b}\) denotes a blue part \(2^s k\), for \(s\ge1\). The blue odd part \(k_{\mathrm b}\) is not included in this chain. Write
\[
m_0(k)=\operatorname{mult}_{k_{\mathrm r}}(\pi),
\qquad
m_s(k)=\operatorname{mult}_{(2^s k)_{\mathrm b}}(\pi)\quad (s\ge1).
\]
We call the \(k\)-chain good if
\[
m_0(k)\in\{0,1\},
\qquad
m_s(k)=0\quad(s\ge1).
\]
Since \(\pi\notin\mathcal G(n)\), at least one \(k\)-chain is not good. Let \(k\) be the least positive odd integer whose chain is not good. We construct \(\Phi(\pi)=\mu\) by changing only this \(k\)-chain.

\begin{enumerate}[leftmargin=2.4em]
\item Suppose that the \(k\)-chain contains at least one blue even part. Let
\[
\ell=\max\{s\ge1:m_s(k)>0\}.
\]
Thus \((2^\ell k)_{\mathrm b}\) is a highest blue even part in this chain.
\begin{enumerate}[label=\textnormal{(\alph*)},leftmargin=2.4em]
\item If \(m_\ell(k)=1\), replace one copy of \((2^\ell k)_{\mathrm b}\) by two copies of \(2^{\ell-1}k\). When \(\ell=1\), these two new parts are two red parts \(k_{\mathrm r}\); when \(\ell>1\), they are two blue even parts \((2^{\ell-1}k)_{\mathrm b}\).
\item If \(m_\ell(k)\ge2\), replace two copies of \((2^\ell k)_{\mathrm b}\) by one copy of \((2^{\ell+1}k)_{\mathrm b}\).
\end{enumerate}

\item Suppose that the \(k\)-chain contains no blue even part. Since the chain is not good, we must have \(m_0(k)\ge2\). In this case, replace two copies of \(k_{\mathrm r}\) by one copy of \((2k)_{\mathrm b}\).
\end{enumerate}

The resulting partition \(\mu\) has the same weight as \(\pi\) and remains in \(\mathcal F(n)\setminus\mathcal G(n)\).  All smaller chains are unchanged, so the next application selects the same \(k\)-chain and reverses the preceding split or merge. Hence \(\Phi^2=\mathrm{id}\). In every case the number of blue even parts changes by one modulo \(2\). Thus
\[
e(\Phi(\pi))\equiv e(\pi)+1\pmod2.
\]
Consequently, all elements of \(\mathcal F(n)\setminus\mathcal G(n)\) cancel in the signed sum \eqref{eq:signed-sum-F23}. The remaining elements are precisely the partitions in \(\mathcal G(n)\); they have no even parts and therefore contribute with positive sign. By Lemma \ref{lem:G-po},
\[
F_2(n)-F_3(n)=g(n)=\overline p_o(n).
\]

Since every element of \(\mathcal F(n)\) is counted by exactly one of \(F_2(n)\) and \(F_3(n)\), and since Theorem \ref{thm:PO}\textnormal{(a)} gives \(F(n)=\overline p(n)\), we also have
\[
F_2(n)+F_3(n)=F(n)=\overline p(n).
\]
Solving these two equations gives
\begin{align*}
F_2(n)&=\frac12\bigl(\overline p(n)+\overline p_o(n)\bigr),\\
F_3(n)&=\frac12\bigl(\overline p(n)-\overline p_o(n)\bigr).
\end{align*}
This proves Theorem \ref{thm:PO}\textnormal{(b), (c)}.
\end{proof}

\section{Proof of Theorem \ref{thm:E01}}\label{sec:E01}

In this section we use Theorem~\ref{thm:E-total} as a known result and prove only the refinement stated in Theorem~\ref{thm:E01}. Since
\[
\mathcal E(n)=\mathcal E_0(n)\,\dot\cup\,\mathcal E_1(n),
\]
it is enough to construct a parity-reversing involution on \(\mathcal E(n)\), where the parity is that of the number of red odd parts.

\begin{proof}[Proof of Theorem \ref{thm:E01}]
Let \(\lambda\in\mathcal E(n)\). For each positive odd integer \(v\), consider the chain
\[
v_{\mathrm r},\quad v_{\mathrm b},\quad (2v)_{\mathrm b},\quad (4v)_{\mathrm b},\quad (8v)_{\mathrm b},\ldots .
\]
Here \(v_{\mathrm r}\) and \(v_{\mathrm b}\) denote the red and blue copies of the odd part \(v\), respectively, while \((2^jv)_{\mathrm b}\) denotes the blue part \(2^jv\) for \(j\ge1\). Since parts in \(\mathcal E(n)\) are distinct and even parts may occur only in blue, each element of such a chain occurs at most once.

Choose the smallest odd integer \(v\) for which at least one part in the corresponding \(v\)-chain occurs in \(\lambda\). We define \(\Phi(\lambda)\) by changing only this chain and leaving all other chains unchanged. There are three cases.

\smallskip
\noindent\textbf{Case 1.} Exactly one of \(v_{\mathrm r}\) and \(v_{\mathrm b}\) occurs in \(\lambda\).  In this case, change its color:
\[
v_{\mathrm r}\longleftrightarrow v_{\mathrm b}.
\]
All other parts are left fixed.

\smallskip
\noindent\textbf{Case 2.} Both \(v_{\mathrm r}\) and \(v_{\mathrm b}\) occur in \(\lambda\).  Let \(a\ge1\) be the smallest integer such that \((2^a v)_{\mathrm b}\) does not occur in \(\lambda\). Thus
\[
v_{\mathrm r},\ v_{\mathrm b},\ (2v)_{\mathrm b},\ldots,(2^{a-1}v)_{\mathrm b}
\]
all occur in \(\lambda\), while \((2^a v)_{\mathrm b}\) does not. Replace this whole block by the single blue part \((2^a v)_{\mathrm b}\):
\[
v_{\mathrm r}+v_{\mathrm b}+(2v)_{\mathrm b}+\cdots+(2^{a-1}v)_{\mathrm b}
\longmapsto (2^a v)_{\mathrm b}.
\]

\smallskip
\noindent\textbf{Case 3.} Neither \(v_{\mathrm r}\) nor \(v_{\mathrm b}\) occurs in \(\lambda\).  By the choice of \(v\), some blue even part in the same chain occurs. Let \(c\ge1\) be the smallest integer such that \((2^c v)_{\mathrm b}\) occurs in \(\lambda\). Replace this part by the block
\[
(2^c v)_{\mathrm b}
\longmapsto
v_{\mathrm r}+v_{\mathrm b}+(2v)_{\mathrm b}+\cdots+(2^{c-1}v)_{\mathrm b}.
\]

The map is weight-preserving: Case 1 is immediate, and Cases 2 and 3 use
\[
v+v+2v+4v+\cdots+2^{r-1}v=2^r v
\qquad (r=a\text{ or }c).
\]
The image also remains in \(\mathcal E(n)\): all new even parts are blue, and the minimal choices in Cases 2 and 3 prevent repetitions.  Case 1 is self-inverse and Cases 2 and 3 are inverse to each other; hence \(\Phi^2=\mathrm{id}\).

In each case the number of red odd parts changes by one modulo \(2\). Therefore \(\Phi\) maps \(\mathcal E_0(n)\) onto \(\mathcal E_1(n)\) and maps \(\mathcal E_1(n)\) onto \(\mathcal E_0(n)\). Hence
\[
E_0(n)=E_1(n).
\]
Using Theorem~\ref{thm:E-total}, we also have
\[
E_0(n)+E_1(n)=E(n)=\overline p_o(n).
\]
Consequently,
\[
E_0(n)=E_1(n)=\frac{\overline p_o(n)}{2}.
\]
This completes the proof.
\end{proof}

\subsection*{Acknowledgements}
The authors were supported by the National Key R\&D Program of China (Grant No. 2024YFA1014500) and the National Natural Science Foundation of China (Grant No. 12201387).

\end{document}